\newsavebox\myboxA
\newsavebox\myboxB
\newlength\mylenA
\newcommand*\xoverline[2][0.75]{%
    \sbox{\myboxA}{$\m@th#2$}%
    \setbox\myboxB\null
    \ht\myboxB=\ht\myboxA%
    \dp\myboxB=\dp\myboxA%
    \wd\myboxB=#1\wd\myboxA
    \sbox\myboxB{$\m@th\overline{\copy\myboxB}$}
    \setlength\mylenA{\the\wd\myboxA}
    \addtolength\mylenA{-\the\wd\myboxB}%
    \ifdim\wd\myboxB<\wd\myboxA%
       \rlap{\hskip 0.5\mylenA\usebox\myboxB}{\usebox\myboxA}%
    \else
        \hskip -0.5\mylenA\rlap{\usebox\myboxA}{\hskip 0.5\mylenA\usebox\myboxB}%
    \fi}
\newtheorem{lemma}{Lemma}
\newtheorem{theorem}{Theorem}
\newtheorem{corollary}{Corollary}
\newtheorem{definition}{Definition}
\newtheorem*{definition*}{Definition}
\newtheorem{example}{Example}
\newtheorem*{assumption*}{Assumption}
\newtheorem{remark}{Remark}
\DeclareMathOperator{\conv}{conv} 
\DeclareMathOperator{\supp}{supp}
\DeclareMathOperator{\cone}{cone} 
\DeclareMathOperator{\rank}{rank}
\DeclareMathOperator{\col}{col}
\definecolor{brightpink}{rgb}{1.0, 0.0, 0.5}
\newcommand{\revise}[1]{{{\color{black} #1}}}
\title{Partial Identifiability for Nonnegative Matrix Factorization} 
\date{}
\author{Nicolas Gillis\thanks{Department of Mathematics and Operational Research, 
University of Mons, 
Rue de Houdain~9, 7000 Mons, Belgium. 
Email: nicolas.gillis@umons.ac.be. NG acknowledges the support by the Fonds de la Recherche Scientifique - FNRS and the Fonds Wetenschappelijk Onderzoek - Vlanderen (FWO) under EOS Project no O005318F-RG47, and by the Francqui Foundation.} \and R{\'o}bert Rajk{\'o}\thanks{Institute of Mathematics and Informatics, Faculty of Sciences, University of P\'ecs, Ifj\'us\'ag u. 6. P\'ecs, Hungary H-7624,  Email: rajko@gamma.ttk.pte.hu} 
	}
\begin{document}

\maketitle

\begin{abstract}
Given a nonnegative matrix factorization, $R$, and a factorization rank, $r$, Exact nonnegative matrix factorization (Exact NMF) decomposes $R$ as the product of two nonnegative matrices, $C$ and $S$ with $r$ columns, such as $R = CS^\top$. A central research topic in the literature is the conditions under which such a decomposition is unique/identifiable, up to trivial ambiguities. In this paper, we focus on partial identifiability, that is, the uniqueness of a subset of columns of $C$ and $S$. We start our investigations with the data-based uniqueness (DBU) theorem from the chemometrics literature. The DBU theorem analyzes all feasible solutions of Exact NMF, and relies on sparsity conditions on $C$ and $S$. We provide a mathematically rigorous theorem of a recently published restricted version of the DBU theorem, relying only on simple sparsity and algebraic conditions: it applies to a particular solution of Exact NMF (as opposed to all feasible solutions) and allows us to guarantee the partial uniqueness of a single column of $C$ or $S$. Second, based on a geometric interpretation of the restricted DBU theorem, we obtain a new partial identifiability result. This geometric interpretation also leads us to another partial identifiability result in the case $r=3$. Third, we show how partial identifiability results can be used sequentially to guarantee the identifiability of more columns of $C$ and $S$. We illustrate these results on several examples, including one from the chemometrics literature. 
\end{abstract}

\textbf{Keywords:} nonnegative matrix factorization, uniqueness, identifiability, self-modeling curve resolution, multivariate curve resolution, window factor analysis

\section{Introduction}

Given a nonnegative matrix $R \in \mathbb{R}^{m \times n}_+$ and a factorization rank $r$, nonnegative matrix factorization (NMF) requires to compute two nonnegative matrices, 
$C \in \mathbb{R}^{m \times r}_+$ and 
$S \in \mathbb{R}^{n \times r}_+$, such that $CS^\top \approx R$. 
NMF has become a standard technique in unsupervised data analysis, and has found numerous applications, e.g., in hyperspectral imaging, audio source separation, topic modeling, community detection, to cite a few; see, e.g., the books~\cite{cichocki2009nonnegative, gillis2020nonnegative} and the references therein. 
An application where NMF has been particularly popular is  multivariate curve resolution (MCR) and self-modeling curve resolution (SMCR) where the input matrix $R$ represents the total response values from some chemical measurements of mixed samples. An example is when we consider the evolution of the spectral profile of a chemical reaction over time. 
More precisely, the $i$th row of $R$ is the cumulative spectral content of the chemical reaction at the $i$th time step.
An NMF of $R$, with   
$R(i,:) \approx C(i,:) S^\top$ for all $i$,  provides the spectral signature of the chemical compounds in $C$, along with their proportion in the reaction over time in $S$. 
In general, the matrix $C$ can be considered as the composition profile-matrix (each column of matrix $C$ is a composition profile of a chemical, e.g., in a reaction in time), and the matrix $S$ is the signal profile-matrix (each column of $S$ will be the spectrum of a chemical). 
This model can cover most types of nonnegative measurement matrices, and has been used successfully in chemistry, physics, biology, engineering, and informatics~\cite{malinowski2002facanalchem,common2010blindsource,cichocki2009nonnegative,gillis2020nonnegative,Wehrens2020R,brown2020comprchemom, leplat2021multiplicative}. 
We provide a consecutive reaction example in which the reactant $X$ forms an intermediate $Y$ and the intermediate forms the product $Z$ in two irreversible first-order reactions:
\ce{ $X$ ->C[$k_1$=20] $Y$ ->C[$k_2$=3] $Z$ }, where $k_1$ and $k_2$ are the first and the second reaction rate constants, respectively.  Figure~\ref{fig:consreac} depicts the data matrix curves and the original composition and signal profiles for the three components, $X$ in `Navy Blue', 
$Y$ in `Chocolate', and 
$Z$ in `Gold Tips' colors. See Section~\ref{sec:numexp} for another example. 
\begin{figure}[ht!]
\begin{center}
\includegraphics[width=1\textwidth]{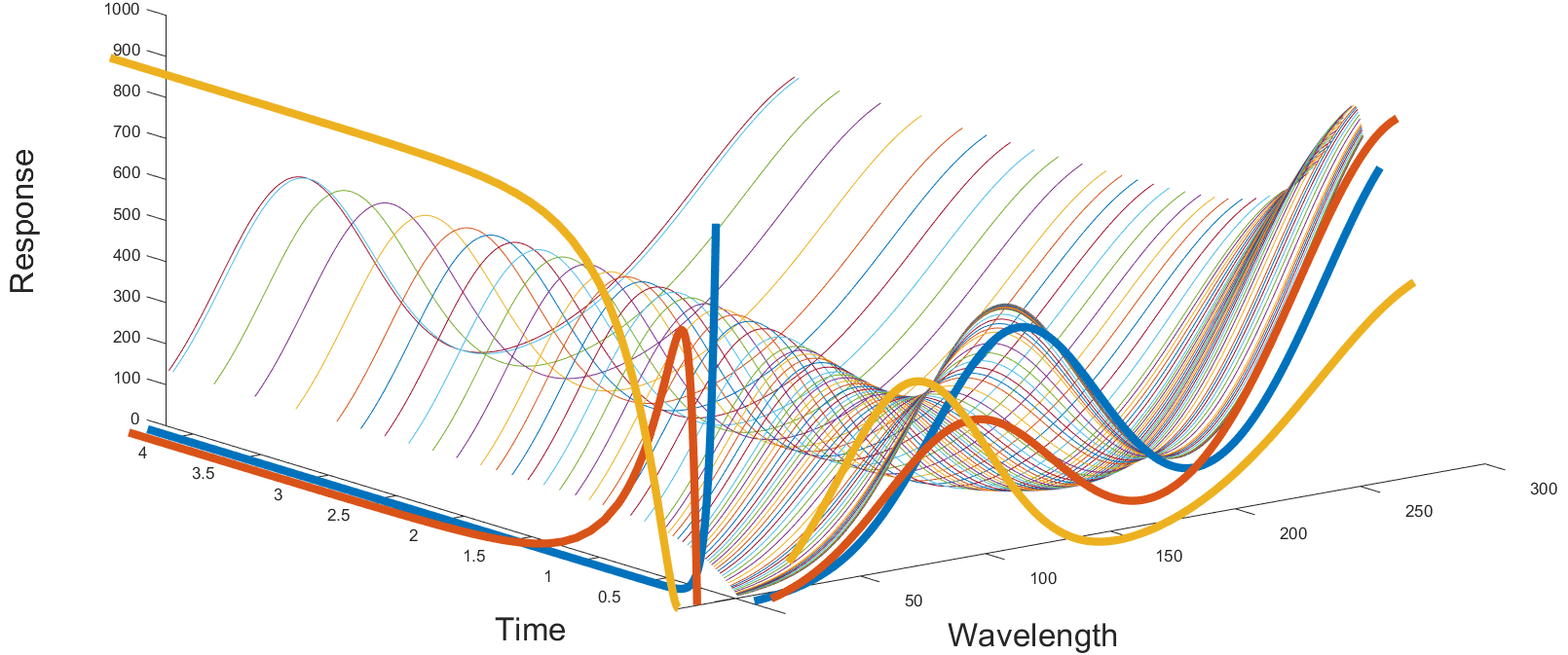} 
\caption{A three-component consecutive reaction example with the original composition in time, and the signal in wavelength profiles. \label{fig:consreac}} 
\end{center}
\end{figure}


\paragraph{Uniqueness / Identifiability} 

A crucial question in many applications is the uniqueness of a decomposition $CS^\top$, up to permutation and scaling, which is also known as the identifiability of $CS^\top$. In fact, uniqueness/identifiability (we will use both words interchangeably without attributes) allows NMF to recover the groundtruth factors that generated the data, such as the sources in audio source separation,  the materials in hyperspectral images, and the chemical components in a reaction; 
see the discussions in~\cite{xiao2019uniq} and~\cite[Chap.~4]{gillis2020nonnegative}, and the references therein.  
To attack this question, we focus in this paper on Exact NMF (that is, an errorless reconstruction), defined as follows. 
\begin{definition}[Exact NMF of size $r$] 
Given a nonnegative matrix $R \in \mathbb{R}^{m \times n}$, 
the decomposition $CS^\top$ where $C \in \mathbb{R}^{m \times r}_+$ and 
$S \in \mathbb{R}^{n \times r}_+$ is an exact NMF of $R$ of size $r$ if $R = CS^\top$. 
\end{definition}

Let us formally define the full uniqueness/identifiability of an Exact NMF.  
\begin{definition}[Full identifiability of Exact NMF] \label{def:uniqNMF} 
The Exact NMF of $R = C_\star S_\star^\top$ of size $r$ is (fully) identifiable (also known as unique, or essentially unique) if and only if for any other Exact NMF of $R = CS^\top$ of size $r$, there exists a permutation matrix $\Pi \in \{0,1\}^{r \times r}$ and a nonsingular diagonal scaling matrix $D$ such that 
\[
C = C_\star \Pi D 
\quad 
\text{ and } 
\quad  
S^\top =  D^{-1} \Pi^\top S_\star^\top. 
\]
In other words, any other Exact NMF of $R = CS^\top$ of size $r$ has the form  
\begin{equation} \label{eq:scaleperm}
CS^\top  
= 
 \sum_{k=1}^r 
 C(:,k) S(:,k)^\top  
 \; 
 =
 \; 
 \sum_{k=1}^r 
 \underbrace{ \alpha_k C_\star(:,\pi_k) }_{C(:,k)} 
 \underbrace{\frac{1}{\alpha_k} S_\star(:,\pi_k)^\top}_{S(:,k)^\top}, 
\end{equation}   
for some permutation $\pi$ of $\{1,2,\dots,r\}$, 
and  some positive scalars $\alpha_k$ $(1 \leq k \leq r)$. 
\end{definition} 

In the NMF literature, all works we are aware of have focused on the full identifiability of Exact NMF, and actually this is simply referred to as the  identifiability of Exact NMF. 
The chemometrics literature has been interested in the question of partial identifiability: when all the chemical components are not identifiable, it asks whether a subset of the profiles of these chemical components are identifiable. 
In the chemometrics literature that studies the MCR problem, 
the following definitions are used~\cite{rajko2015definition}: 
\begin{itemize}
    \item full uniqueness: all profiles  of all components  are unique, that is, all columns of $C$ and $S$ are identifiable. 
    This coincides with Definition~\ref{def:uniqNMF} above. 
    
    \item partial uniqueness: both profiles of one or more, but not all, components are unique.
    
    \item fractional uniqueness: a single profile of a component is recovered uniquely while the others are not necessarily. This coincides with Definition~\ref{def:partialUniqNMF} below. 
    
    \item non-uniqueness: no profile is identifiable, that is, a unique solution does not exist even for a single profile. 
    This definition would be different from the NMF literature, where   non-uniqueness means that at least one profile is not identifiable. 

\end{itemize}

In this paper, we will focus on full identifiability  (Definition~\ref{def:uniqNMF}) and   partial identifiability which we  define as follows.   
\begin{definition}[Partial identifiability in Exact NMF] \label{def:partialUniqNMF} 
Let $R=C_\star S_\star^\top$ be an exact NMF of $R$ of size~$r$. 
The $k$th column of $C_\star$ is identifiable if and only if for any other Exact NMF of $R = CS^\top$ of size $r$, there exists an index set $j$ and a scalar $\alpha > 0$ such that 
\[
C(:,j)  \; = \;  \alpha C_\star(:,k) . 
\]
\end{definition} 

Similarly, we can define the identifiability of the $k$th column of $S_\star$ using symmetry, which is referred to as the duality principle in the chemometrics literature~\cite{rajko2006duality}, 
since $R = C_\star S_\star^\top$ if and only if 
$R^\top = S_\star C_\star^\top$. We will focus in this paper on the partial identifiability of the first factor, $C_\star$, without loss of generality, by symmetry of the problem: any result that applies to $C_\star$ applies to $S_\star$.  

Most results on the identifiability of Exact NMF focus on the case  $r = \rank(R)$ as it is the most reasonable in most applications. We will also focus on this case in this paper.

\paragraph{Contribution and outline of the paper} Although partial  identifiability has been considered in the chemometrics literature, there does not exist, to the best of our knowledge, a detailed formal description (that is, a formal mathematical theorem) of the assumptions needed to obtain such results, nor rigorous proofs. 
The main contribution of this paper is to provide several new theorems regarding the partial identifiability of Exact NMF. 

The paper is organized as follows. 
In Section~\ref{sec:prelimgeo}, we briefly recall the geometric interpretation of Exact NMF on which our results and many identifiability results in the literature  rely on. 
In Section~\ref{sec:prev}, we review important results on the identifiability of Exact NMF that will be useful in our discussions.   
Section~\ref{sec:first} contains our main contributions, namely 
\begin{itemize}
    \item The restricted DBU theorem (Theorem~\ref{mainth}), a partial identifiability theorem for Exact NMF. 
    
    \item A geometric interpretation of the restricted DBU theorem (Lemma~\ref{lemFRZRWvert}). 
    It will lead us to a new partial identifiability theorem for Exact NMF (Theorem~\ref{mainthgeo0}). 
    
    \item A new theorem allowing us to use any partial identifiability theorem sequentially to guarantee the uniqueness of several columns of $C$ and $S$ (Theorem~\ref{mainth2}).     
    
    \item A new partial identifiability theorem for Exact NMF in the special case $r=3$ (Theorem~\ref{corthgeo}). 
\end{itemize}

Finally, in Section~\ref{sec:appli}, we discuss the practical implications of our result, provide an algorithm to automatically check partial identifiability in an Exact NMF which is available from 
\begin{center}
\color{blue} \url{https://gitlab.com/ngillis/nmf-partial-identifiability} 
\end{center}
\color{black} 
along with all the examples presented in the paper,  
and illustrate the algorithm on an example from the chemometrics literature. Note that we also provide small examples throughout the paper to illustrate our theoretical results.

\section{Preliminary: Geometric interpretation of Exact NMF} \label{sec:prelimgeo}

Most results on the identifiability of Exact NMF rely on its geometric interpretation, including the results of this paper. 
We therefore briefly recall it here for completeness. 

For an Exact NMF $R = CS^\top$, we can assume w.l.o.g.\ that $R$, $C$ and $S^\top$ are column stochastic, that is, the entries in each column sum to one. Hence each column of $R$, $C$ and $S^\top$ has unit $\ell_1$-norm (a.k.a.\ absolute sum norm, area norm, grid norm, taxi cabnorm, Manhattan norm). The $\ell_1$-norm coincides with the so-called Borgen norm with $z = e$ in the chemometrics literature, where $e$ is the vector of all ones of appropriate dimension~\cite{GRANDE2000,rajko2009BorgenNorm}. 
In fact, one can first remove zero columns and rows of $R$, and remove the corresponding columns and rows of $S^\top$ and $C$, respectively, 
which do not bring any useful information, while it may lead to numerical problems~\cite{Omidikia2020sparse-norms}. 
Then one can normalize $R = CS^\top$ as follows: 
 \begin{equation} \label{eq:sumtoone}
 R_n(:,j) 
 :=  \frac{R(:,j)}{R(:,j)^\top e}  
 = \sum_{k=1}^r \underbrace{
 \frac{C(:,k)}{C(:,k)^\top e}
 }_{
  := C_n(:,k)
 }
    \underbrace{
    \frac{C(:,k)^\top e}{R(:,j)^\top e} S(j,k)
    }_{
     := S_n(j,k)
    }
    = \sum_{k=1}^r C_n(:,k) S_n(j,k). 
 \end{equation}
Hence $R_n = C_n S_n^\top$ where $R_n$ and $C_n$ are column stochastic (that is, $e^\top = e^\top R_n$ and $e^\top C_n=e^\top$), by construction, while $S_n^\top$ is because 
\begin{equation} \label{sumtoonelemma} 
e^\top = e^\top R_n = e^\top C_n S_n^\top = e^\top S_n^\top. 
\end{equation}
Let us therefore assume, w.l.o.g., that $R$, $C$ and $S^\top$ are column stochastic. See the chemometrics analogue using Borgen norms and closure in~\cite{rajko2009BorgenNorm,Omidikia2018closure}. 
 This means that, after normalization, the columns of $R$ belong the convex hull of the columns of~$C$ that are column stochastic, since, for all $j$,  
 \[
 R(:,j) = \sum_{k=1}^r C(:,k) S(j,k) = C S(j,:)^\top, 
 \]
 where 
 $S(j,:)^\top \in \Delta = \{ x \ | \ x \geq 0, e^\top x = 1\}$, with  $\Delta$ the probability simplex of appropriate dimension. 
In the case $r = \rank(R)$, we must have $\col(R) = \col(C)$, and therefore 
 \[
 \conv(R) 
 \;  \subseteq \;  \conv(C) 
 \; \subseteq \;  \Delta \cap \col(R),  
 \]
 where   $\conv(R) = \{ x \ | \ x = Ry, y \in \Delta \}$; 
 see, e.g., \cite[Chapter~2]{gillis2020nonnegative}. 
Hence Exact NMF reduces to finding a polytope (that is, a bounded polyhedron), $\conv(C)$ with $r$ vertices (the columns of $C$), nested between $\conv(R)$ and $\Delta \cap \col(R)$. This is the so-called nested polytope problem (NPP) in  computational geometry which is defined as follows. 
 \begin{definition}[Nested polytope problem (NPP)]
 Given a full-dimensional inner polytope defined by its vertices $\{v_1,v_2,\dots,v_n\}$, that is, 
 \[
 \mathcal{P}_{inn} = \conv([v_1,v_2,\dots,v_n]) \subseteq \mathbb{R}^d, 
 \] 
 a full-dimensional outer polytope defined by its facets\footnote{A facet of 
a $d$-dimensional polytope is a $(d-1)$-dimensional face. A face of a polytope is the intersection of that polytope with any closed half space whose boundary is disjoint from the interior of the polytope. 
For the polytope $\mathcal{P}_{out}$, each facet will have the form 
$\{  x \in  \mathcal{P}_{out} \ | \ F(i,:) x + g_i = 0 \}$ for some $i$. } 
 \[
 \mathcal{P}_{out} = \{ x  \in \mathbb{R}^d \ | \ Fx + g \geq 0 \} \text{ where } 
 F \in \mathbb{R}^{m \times r} \text{ and }  g \in \mathbb{R}^{m}, 
 \] 
such that $\mathcal{P}_{inn} \subseteq \mathcal{P}_{out}$, and an integer $p \geq d+1$, 
 find a polytope, $\mathcal{P}_{bet}$, with $p$ vertices nested \emph{between}  $\mathcal{P}_{inn}$ and $\mathcal{P}_{out}$, that is, $\mathcal{P}_{inn} \subseteq  \mathcal{P}_{bet}  \subseteq  \mathcal{P}_{out}$.  
 \end{definition}
 
 The polytope  $\conv(R)$ is typically not full-dimensional, since $m > r$ in most cases. In fact, $\conv(R)$ has dimension $\rank(R) - 1$, and the NPP corresponding to the Exact NMF of $R$ satisfies $d = \rank(R) - 1$. 
 However, up to restricting the solution space to the affine hull of $R$, 
 the set $\conv(R)$ plays the role of $\mathcal{P}_{inn}$ in the NPP, and $\Delta \cap \col(R)$ the role of $\mathcal{P}_{out}$. 
 
 \begin{theorem}\cite{vavasis2010complexity}   
The Exact NMF problem with $r = \rank(R)$ is equivalent to an NPP with $d = \rank(R)-1$ and $p = d+1$, and vice versa.  
 \end{theorem}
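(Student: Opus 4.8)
The plan is to turn the inclusion chain $\conv(R) \subseteq \conv(C) \subseteq \Delta \cap \col(R)$ derived above into a precise dictionary between the algebraic data of an Exact NMF and the geometric data of an NPP, and to check that this dictionary is reversible. Throughout I would assume, as justified by the normalization in~\eqref{eq:sumtoone}, that $R$, $C$ and $S^\top$ are column stochastic, so that every column lies in the affine hyperplane $H = \{x : e^\top x = 1\}$. The first preparatory step is a dimension count: since $r = \rank(R)$, the subspace $\col(R)$ has dimension $r$, and because the columns of $R$ lie in $H$ (hence avoid the origin) their affine hull is the full $(r-1)$-dimensional affine set $H \cap \col(R)$. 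Thus $\conv(R)$ and $\Delta \cap \col(R)$ both sit inside, and are full-dimensional in, the same affine space; fixing an affine coordinate system on $H \cap \col(R)$ turns them into full-dimensional polytopes in $\mathbb{R}^d$ with $d = r-1 = \rank(R)-1$, which is exactly the ambient dimension the NPP requires.

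For the forward direction (Exact NMF $\Rightarrow$ NPP), I would take an Exact NMF $R = CS^\top$ with $r = \rank(R)$ and exhibit the three NPP ingredients. The inner polytope is $\mathcal{P}_{inn} = \conv(R)$, which is full-dimensional by the count above; the outer polytope is $\mathcal{P}_{out} = \Delta \cap \col(R)$, written in facet form $\{x : Fx + g \ge 0\}$ by restricting the nonnegativity constraints $x \ge 0$ that cut out $\Delta$ to the subspace $\col(R)$, each surviving (non-redundant) inequality becoming a facet. I then set $\mathcal{P}_{bet} = \conv(C)$: it has $p = r = d+1$ vertices, namely the columns of $C$; the inclusion $\conv(R) \subseteq \conv(C)$ holds because each column $R(:,j) = C\,S(j,:)^\top$ is a convex combination of the columns of $C$ (as $S(j,:)^\top \in \Delta$ by column stochasticity of $S^\top$); and $\conv(C) \subseteq \mathcal{P}_{out}$ holds because $C \ge 0$ is column stochastic with $\col(C) = \col(R)$. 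Hence $\conv(C)$ is a valid between-polytope, and every Exact NMF yields an NPP solution.

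For the converse (NPP $\Rightarrow$ Exact NMF), given an NPP in $\mathbb{R}^d$ with $p = d+1$, inner vertices $v_1,\dots,v_n$, and outer polytope $\{x : Fx + g \ge 0\}$, I would embed the whole picture into $\mathbb{R}^m$ through the affine map $\phi(x) = Fx + g$, so that the half-space constraints become nonnegativity: $\phi(\mathcal{P}_{out})$ is the intersection of the affine set $\phi(\mathbb{R}^d)$ with the nonnegative orthant, and $\phi$ is a bijection onto its image. After rescaling $F,g$ (and adding a redundant facet if needed) so that $e^\top(Fx+g)$ is constant, without altering the polytopes, this image is $\Delta \cap \col(R)$ for $R := [\phi(v_1),\dots,\phi(v_n)]$, while $\conv(R) = \phi(\mathcal{P}_{inn})$ and $\rank(R) = d+1$ since the full-dimensional $\mathcal{P}_{inn}$ forces its image to span a $(d+1)$-dimensional column space. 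A between-polytope $\mathcal{P}_{bet}$ with $d+1$ vertices then corresponds via $\phi$ to a column-stochastic $C \ge 0$ with $\col(C) = \col(R)$ and $\conv(R) \subseteq \conv(C)$; expressing each column of $R$ as a convex combination of the columns of $C$ and collecting the coefficient vectors as the rows of $S$ yields $R = CS^\top$, an Exact NMF of size $r = \rank(R)$.

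The main obstacle I anticipate is the bookkeeping around dimensionality and coordinates rather than any deep inequality. Because $\conv(R)$ is \emph{not} full-dimensional in $\mathbb{R}^m$, the equivalence only holds after restricting to $H \cap \col(R)$, so I must verify that $\mathcal{P}_{inn}$, $\mathcal{P}_{out}$, and every candidate $\mathcal{P}_{bet}$ genuinely share this one $(r-1)$-dimensional affine space, and in particular that $r = \rank(R)$ forces $\dim \conv(R) = r-1$ as the NPP demands of a full-dimensional inner polytope. On the backward side the delicate point is the embedding $\phi$: one has to confirm that restricting $x \ge 0$ to $\col(R)$ reproduces exactly the facet system $Fx + g \ge 0$ and that the column-stochastic normalization can always be imposed without changing the geometry. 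Once this coordinate identification is pinned down, both reductions fall out of the inclusion chain, and the ``vice versa'' is simply the observation that $\phi$ and the passage to affine coordinates are mutually inverse constructions.
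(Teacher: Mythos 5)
Your proposal is correct and takes essentially the same route as the paper: the paper itself defers the proof to Vavasis (2010), but the two reductions it sketches in Section~\ref{sec:prelimgeo} --- column-stochastic normalization, taking $\mathcal{P}_{inn} = \conv(R)$ and $\mathcal{P}_{out} = \Delta \cap \col(R)$ in affine coordinates on $\{x : e^\top x = 1\} \cap \col(R)$, and conversely setting $R(:,j) = F v_j + g$ --- are exactly the dictionary you construct. The only differences are cosmetic: the paper fixes coordinates concretely via $r$ linearly independent columns of $R$ (the matrix $V(1{:}r-1,:)$) where you use an abstract affine chart, and the auxiliary facts you flag but do not verify (injectivity of $\phi$, and the existence of a positive row rescaling making $e^\top(Fx+g)$ constant) are both true and follow from the boundedness and full-dimensionality of $\mathcal{P}_{out}$.
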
 
 
 The equivalence between Exact NMF and the NPP can be used to study the identifiability of Exact NMF. 
 For example, for $\rank(R) = r = 2$, the NPP is trivial since $\mathcal{P}_{inn}$ and $\mathcal{P}_{out}$ are one-dimensional polytopes, that is, segments~\cite{lawton1971self}; see also, e.g., \cite{Rajko2010additional2comp}. 
 The Exact NMF of $R$ when $r = 2$ is unique if and only if $\mathcal{P}_{inn} = \mathcal{P}_{out}$ in the corresponding NPP, which leads to necessary and sufficient conditions of $R$; see Section~\ref{sec:prev}.  
 
 For $r=3$, the NPP has dimension two, and has been used extensively in the MCR literature to study the identifiability of Exact NMF; 
 see, e.g., \cite{Borgen1985compres3comp,Rajko2005analsol3comp,Golshan2016review3compRFS}.
They refer to the NPPs with feasible regions as Borgen-Rajk\'o plots; 
see below for an example of NPPs, and also Section~\ref{sec:first}. 
In this case, it is particularly useful to know how to reduce an instance of Exact NMF to the NPP, and vice versa. Let us briefly recall these reductions which we will use later in the paper. 

\paragraph{From Exact NMF to the NPP} Let $R$ be an instance of Exact NMF with $r = \rank(R)$. First remove zero columns and rows of $R$, 
and normalize $R$ to become column stochastic. Let $\mathcal{L}$ be the index set of $r$ linearly independent columns of $R$, so that $R = R(:, \mathcal{L}) V \geq 0$ for some $V$. Since $R$ and $U = R(:, \mathcal{L})$ are column stochastic, \revise{the entries in each column of $V$ sum to one, by the same argument as in~\eqref{sumtoonelemma}}. 
We define $v_j = V(1:r-1,j)$ for $j=1,2,\dots,n$ whose convex hull form $\mathcal{P}_{inn}$, while   
\[
\mathcal{P}_{out} = \{ x \in \mathbb{R}^{r-1} 
\ | \ 
U(:,1:r-1) x + U(:,r) (1-e^\top x) \geq 0 \}. 
\]
This NPP instance has a solution with $r$ vertices if and only if $R$ admits an exact NMF of size $r$~\cite{vavasis2010complexity}. 

\paragraph{From NPP to Exact NMF} This reduction is particularly useful to construct matrices coming from NPP problems in two dimensions. Given an NPP instance, the matrix $R$ is constructed as follows: for all $j=1,2,\dots,n$  
\[
R(:,j) = F v_j + g, \text{ where } R(:,j) \geq 0 \text{ since } v_j \in \mathcal{P}_{inn} \subseteq \mathcal{P}_{out}. 
\]
The matrix $R$ admits an exact NMF of size $r$ if and only if the NPP instance has a solution with $r$ vertices~\cite{vavasis2010complexity}. Observe that each row of $R$ corresponds to a facet of $\mathcal{P}_{out}$ and each column to a vertex of $\mathcal{P}_{inn}$, while $R(i,j)$ is the so-called slack of the $j$th vertex with respect to the $i$th facet, namely $R(i,j) = F(i,:) v_j + g_i$.  

 
 \begin{example} \label{exNPP}
 Let us consider the NPP where $\mathcal{P}_{out}$ is the unit square  $[0,1]^2$ defined with the inequalities $Fx + g \geq 0$ where 
 \[
 F = \left( \begin{array}{cccc}
      0  &   0 &   1  &  -1 \\
     1 &   -1  &   0   &  0  
\end{array} \right)^\top, \; 
 g = \left( \begin{array}{cccc} 0 & 1 & 0 & 1 
  \end{array} \right)^\top, 
 \] 
 while $\mathcal{P}_{inn}$ is the quadrilateral with the four vertices 
 $v_1=(0.5,0)$, $v_2=(0,0.5)$, $v_3=(0.25,0.75)$ and $v_4 = (0.75,0.25)$; 
 see Figure~\ref{ex1NPP} for an illustration. 
 \begin{figure}[ht!]
\begin{center}
\includegraphics[width=0.4\textwidth]{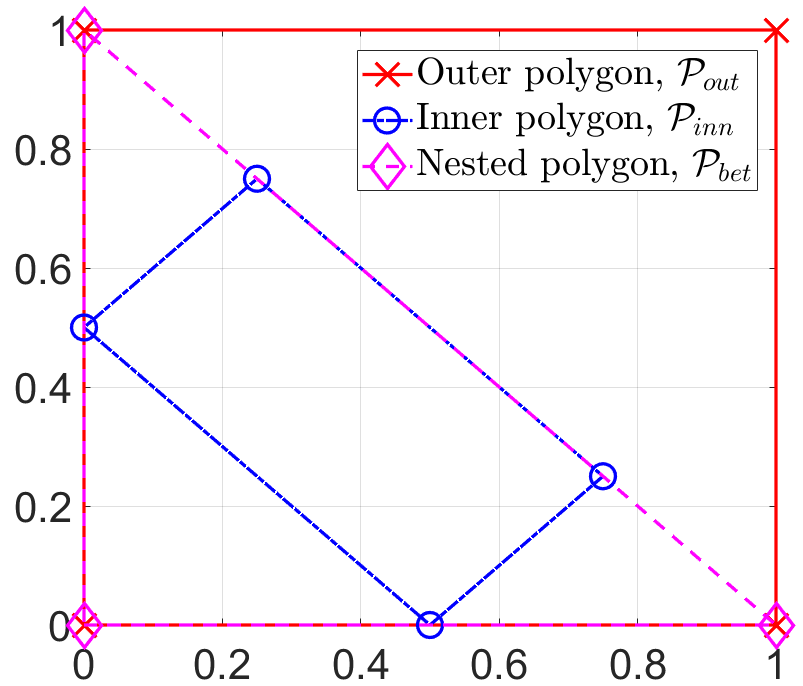} 
\caption{Illustration of the NPP instance described in Example~\ref{exNPP}.\label{ex1NPP}} 
\end{center}
\end{figure}
 The matrix $R$ of the corresponding Exact NMF problem is given by $R(:,j) = F v_j + g$ for all $j$, that is, 
 \[
 R = \frac{1}{4} \left( \begin{array}{cccc}
      0  &   2 &   3 &   1 \\
      4  &   2 &   1 &   3 \\
      2  &   0 &   1 &   3 \\
      2  &   4 &   3 &   1
 \end{array} \right). 
 \] 
 Looking at Figure~\ref{ex1NPP}, we observe that the unique nested triangle between $\mathcal{P}_{inn}$ and $\mathcal{P}_{out}$ has the vertices 
 $s_1 = (0,0)$,  $s_2 = (1,0)$ and $s_3 =  (0,1)$, implying that $R$ has a unique Exact NMF of size 3, 
 given by 
  \[
 R = \frac{1}{4} \left( \begin{array}{cccc}
      0  &   2 &   3 &   1 \\
      4  &   2 &   1 &   3 \\
      2  &   0 &   1 &   3 \\
      2  &   4 &   3 &   1
 \end{array} \right) 
 = 
 \frac{1}{4} \left( \begin{array}{ccc}
      0  &   0 &   1  \\
      1  &   1 &   0  \\
      0  &   1 &   0  \\
      1  &   0 &   1 
 \end{array} \right) 
 \left( \begin{array}{cccc}
      2  &   2 &   0 &   0  \\ 
      2  &   0 &   1 &   3 \\
      0  &   2 &   3 &   1 \\
 \end{array} \right). 
 \] 
 The first factor, $C$, in the above decomposition is obtained using $C(:,j) = Fs_j + g$ for $j=1,2,3$. 
 \end{example}

\section{Previous works on the identifiability of Exact NMF} \label{sec:prev}

The conditions that makes Exact NMF identifiable have been studied extensively in the literature. In this section, 
we briefly review some of the important works on the identifiability of Exact NMF.

\subsection{Full identifiability} \label{sec:fullid}

Let us first discuss some conditions under which Exact NMF is fully identifiable, as in Definition~\ref{def:uniqNMF}.

\paragraph{Necessary condition}

Let us state a necessary condition for Exact NMF to be fully identifiable. The condition has been rediscovered several times, and is relatively easy to prove. It is based on the support of the columns of $C_\star$ and $S_\star$; the support being the set of indices containing the non-zero entries. 
\begin{theorem} \label{th:nec} 
Let $R = C_\star S_\star^\top$ be a fully identifiable exact NMF of $R$ of size~$r$. 
Then, the support of any column of $C_\star$ (resp.\ $S_\star$)  does not contain the support of any other column of $C_\star$ (resp.\ $S_\star$). 
\end{theorem}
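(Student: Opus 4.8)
The plan is to prove the contrapositive: assuming that the support of some column of $C_\star$ contains the support of another column, I would construct a second Exact NMF of $R$ of size $r$ that is not obtained from $C_\star S_\star^\top$ by permutation and scaling, contradicting full identifiability in the sense of Definition~\ref{def:uniqNMF}. By the duality principle ($R = C_\star S_\star^\top$ if and only if $R^\top = S_\star C_\star^\top$) it suffices to treat the factor $C_\star$; the statement for $S_\star$ then follows by symmetry. So suppose there are indices $i \neq j$ with $\supp(C_\star(:,j)) \subseteq \supp(C_\star(:,i))$.

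The construction is a rank-one shear that leaves the product invariant. I would keep every column fixed except two, setting
\[
C(:,i) = C_\star(:,i) - \epsilon\, C_\star(:,j),
\qquad
S(:,j) = S_\star(:,j) + \epsilon\, S_\star(:,i),
\]
with all other columns of $C$ and $S$ equal to those of $C_\star$ and $S_\star$, and $\epsilon > 0$ to be fixed below. A direct expansion of $C(:,i)S(:,i)^\top + C(:,j)S(:,j)^\top$ shows that the two cross terms $\pm\,\epsilon\, C_\star(:,j) S_\star(:,i)^\top$ cancel, so $CS^\top = C_\star S_\star^\top = R$ for every $\epsilon$.

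Next I would verify nonnegativity of $C$ and $S$ for small $\epsilon$, which is exactly where the support hypothesis enters. The matrix $S$ is nonnegative because $S(:,j)$ is a sum of two nonnegative vectors. For $C$, only $C(:,i)$ is modified: on indices $\ell$ with $C_\star(\ell,j) = 0$ the entry is unchanged, while on indices $\ell$ with $C_\star(\ell,j) > 0$ the containment guarantees $C_\star(\ell,i) > 0$, so $C(\ell,i) \geq 0$ as soon as $\epsilon \leq \min_{\ell:\, C_\star(\ell,j) > 0} C_\star(\ell,i)/C_\star(\ell,j)$, a strictly positive quantity. Fixing any such $\epsilon > 0$ produces a genuine Exact NMF $R = CS^\top$ of size $r$.

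Finally I would show that this new factorization cannot come from $C_\star$ by permutation and scaling, which I expect to be the main point and is where the standing assumption $r = \rank(R)$ is used. Since $\rank(C_\star) \geq \rank(C_\star S_\star^\top) = \rank(R) = r$ and $C_\star$ has $r$ columns, its columns are linearly independent. Consequently $C(:,i) = C_\star(:,i) - \epsilon\, C_\star(:,j)$ cannot equal $\alpha\, C_\star(:,k)$ for any index $k$ and scalar $\alpha$: each such equality would force a nontrivial linear relation among the columns of $C_\star$ (using $\epsilon > 0$ and $i \neq j$), contradicting their independence. Hence $C \neq C_\star \Pi D$ for any permutation $\Pi$ and nonsingular diagonal $D$, so the Exact NMF is not fully identifiable, completing the contrapositive. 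The only delicate step is this last one: the full-column-rank property is precisely what rules out the degenerate possibility that the sheared column accidentally realigns with an existing column of $C_\star$.
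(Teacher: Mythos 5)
Your proposal is correct and follows essentially the same route as the paper: the identical rank-one shear $C(:,i) \mapsto C_\star(:,i) - \epsilon\, C_\star(:,j)$, $S(:,j) \mapsto S_\star(:,j) + \epsilon\, S_\star(:,i)$, with the support containment giving nonnegativity for small $\epsilon > 0$. The only difference is that the paper merely asserts the resulting factorization is not a permutation and scaling of $C_\star S_\star^\top$, whereas you justify it explicitly via linear independence of the columns of $C_\star$ (using the standing assumption $r = \rank(R)$), which is a welcome but minor refinement.
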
 
\begin{proof}
If a column of $C_\star$, say $C_\star(:,1)$, contains the support of another column, say $C_\star(:,2)$, then $C_\epsilon(:,1) = C_\star(:,1) - \epsilon C_\star(:,2) \geq 0$ for $\epsilon > 0$ sufficiently small, which allows to construct another Exact NMF. In fact, taking  
$S_\epsilon(:,2) =  S_\star(:,2) + \epsilon S_\star(:,1) \geq 0$ and keeping the other columns untouched, that is, 
$C_\epsilon(:,k) = C_\star(:,k)$ for all $k \neq 1$ and $S_\epsilon(:,k) = S_\star(:,k)$ for all $k \neq 2$, we obtain an Exact NMF $C_\epsilon S_\epsilon^\top$ which is not a permutation and scaling of $C_\star S_\star^\top$. 
\end{proof}

 Interestingly, this condition is also sufficient when $r=2$, which is the only case for which we have a necessary and sufficient condition for Exact NMF to be identifiable. 
 For $r=2$, this means that Exact NMF is identifiable if and only if $C_\star$ and  $S_\star$ contain a 2-by-2 diagonal submatrix: each of the two columns of $C_\star$ (resp.\ $S_\star$) must contain a positive entry where the other column has a zero entry.

\paragraph{Sufficient condition based on separability} 

Several identifiability results for Exact NMF are based on the separability condition, defined as follows. 
\begin{definition}[Separability] \label{def:sep}
The matrix $C \in \mathbb{R}^{m \times r}$ with $m \geq r$ is separable if there exists an index set $\mathcal{K}$ of size $r$ 
such that $C(\mathcal{K},:) \in \mathbb{R}^{r \times r}$ 
is a nonsingular diagonal matrix. 
\end{definition}
Equivalently, the separable conditions requires that for each $k=1,2,\dots,r$, there exists an index~$j$ such that $C(j,:) = \alpha e_{(k)}^\top$ for some $\alpha > 0$ where $e_{(k)}$ is the $k$th unit vector (that is, the $k$th column of the identity matrix; recall that the notation $e$ without the subscript $_{(k)}$ is for the all-one vector, that is, the vector of all ones of appropriate dimension). 
The separability condition was introduced in the NMF literature by Donoho and Stodden~\cite{donoho2004does}. 
We have the following result. 
\begin{theorem} \label{th:sep} 
Let $R = C_\star S_\star^\top$ be an exact NMF of $R$ of size~$r$. 
If $C_\star$ and $S_\star^\top$ are separable, then $R = C_\star S_\star^\top$ is fully identifiable. 
\end{theorem}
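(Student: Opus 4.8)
The plan is to reduce the problem to a full-rank change of basis and then use separability of \emph{both} factors to force that change of basis to be a scaled permutation. Since we work in the regime $r = \rank(R)$, both $C_\star$ and $S_\star$ have full column rank $r$, and for any competing Exact NMF $R = CS^\top$ of size $r$ one has $\col(C) = \col(R) = \col(C_\star)$. First I would exploit this: because $C$ and $C_\star$ are two bases of the same $r$-dimensional column space, there is a unique invertible matrix $Q \in \mathbb{R}^{r \times r}$ with $C = C_\star Q$ (explicitly $Q = C_\star^\dagger C$, invertible since $C$ has full column rank and the same column space). Substituting into $C_\star S_\star^\top = R = C S^\top = C_\star Q S^\top$ and cancelling $C_\star$ on the left (it has full column rank) yields $S^\top = Q^{-1} S_\star^\top$, that is, $S = S_\star Q^{-\top}$. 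Thus full identifiability is equivalent to showing that $Q$ must be monomial, i.e.\ a permutation matrix times a positive diagonal matrix.

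Next I would convert separability into sign constraints on $Q$. By separability of $C_\star$ (Definition~\ref{def:sep}) there is a row index set $\mathcal{K}_C$ of size $r$ with $C_\star(\mathcal{K}_C,:)$ equal to a nonsingular diagonal matrix $D_C$ having positive diagonal entries (after ordering $\mathcal{K}_C$ appropriately; any relabelling of the columns of $C_\star$ needed here will be absorbed into the final permutation $\Pi$). Reading off these rows in $C = C_\star Q$ gives $C(\mathcal{K}_C,:) = D_C\, Q$, and since $C \ge 0$ while $D_C$ is a positive diagonal matrix, this forces $Q \ge 0$. Symmetrically, separability of $S_\star^\top$ amounts to the existence of a row index set $\mathcal{K}_S$ with $S_\star(\mathcal{K}_S,:) = D_S$ a positive diagonal matrix, and from $S = S_\star Q^{-\top}$ we obtain $S(\mathcal{K}_S,:) = D_S\, Q^{-\top} \ge 0$, hence $Q^{-\top} \ge 0$, i.e.\ $Q^{-1} \ge 0$.

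Finally I would invoke the classical fact that a nonnegative invertible matrix whose inverse is also nonnegative is necessarily monomial, so that $Q = \Pi D$ for a permutation matrix $\Pi$ and a positive diagonal matrix $D$. (A short justification suffices: from $Q Q^{-1} = I$ with both factors nonnegative, each off-diagonal entry of $I$ is a sum of nonnegative products that must all vanish while each diagonal entry is positive, and a brief combinatorial argument then shows every row and column of $Q$ has exactly one nonzero entry.) Substituting $Q = \Pi D$ gives $C = C_\star \Pi D$ and $S = S_\star Q^{-\top} = S_\star \Pi D^{-1}$, which is precisely the scaling-and-permutation form required by Definition~\ref{def:uniqNMF}. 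The only genuinely non-routine ingredient here is this monomial-matrix lemma; everything else is bookkeeping with the $r = \rank(R)$ assumption. The one subtlety I would track carefully is the permutation ambiguity built into Definition~\ref{def:sep} (the diagonal submatrix is determined only up to a simultaneous reordering): one must make the labellings extracted from $C_\star$ and from $S_\star$ compatible, which is handled cleanly by fixing the column ordering of $C_\star$ once and letting $\Pi$ account for the reshuffling.
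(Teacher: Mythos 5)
Your proof is correct, but note that the paper itself gives \emph{no} proof of Theorem~\ref{th:sep}: it states the result and remarks that it is hard to trace its origin and can be derived from the result of~\cite{donoho2004does}. So there is no ``paper proof'' to match; your argument fills that gap with the standard (and clean) route: write any competing factorization as a change of basis $C = C_\star Q$ (valid because $\rank(R)=r$ forces $\col(C)=\col(R)=\col(C_\star)$ and full column rank of both), deduce $S = S_\star Q^{-\top}$, use separability of $C_\star$ to get $Q \geq 0$ and separability of $S_\star^\top$ to get $Q^{-1} \geq 0$, and invoke the classical fact that a nonnegative matrix with nonnegative inverse is monomial, which is exactly the permutation-and-scaling form of Definition~\ref{def:uniqNMF}. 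Two small observations. First, you invoke the paper's blanket assumption $r=\rank(R)$, but in fact you do not need it as an extra hypothesis: separability of $C_\star$ and of $S_\star^\top$ already forces $\rank(C_\star)=\rank(S_\star)=r$, hence $\rank(R)=r$, so your proof actually establishes the theorem as stated, self-contained. Second, the labelling subtlety you flag at the end is harmless and in fact vacuous: the two separability conditions impose the constraints $Q\ge 0$ and $Q^{-1}\ge 0$ on the \emph{same} matrix $Q$ regardless of how the index sets $\mathcal{K}_C$ and $\mathcal{K}_S$ are ordered (an arbitrary row ordering just replaces the diagonal submatrix by a monomial one, which yields the same sign conclusion), so no compatibility between the two labellings is needed.
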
 
It is difficult to trace back the origin of Theorem~\ref{th:sep}, and it does explicitly appear in~\cite{donoho2004does}, although it can be derived from their result, where they relax the condition on $C_\star$. 
Other sufficient conditions based on separability have been proposed in the literature, where $S_\star$ is required to be separable, while there is some sparsity conditions on $C_\star$~\cite{laurberg2008theorems}. The results of this paper will be of this flavour, but will focus on partial identifiability.

\paragraph{Sufficiently scattered condition} 

The separability condition for both factors, $C$ and $S^\top$, is rather strong, and not satisfied by most data sets; see, e.g., the discussion in~\cite[Chapter 4]{gillis2020nonnegative}. It can be relaxed to the following condition while retaining the full identifiability. 
\begin{definition}[Sufficiently scattered condition] \label{def:SSC}
The matrix $C \in \mathbb{R}^{m \times r}$ with $m \geq r$ satisfies the sufficiently scattered condition (SSC) if 
\begin{enumerate}
  \item  $\{x \in \mathbb{R}^r_+ \ | \ e^\top x \geq \sqrt{r-1} \|x\|_2 \} \; \subseteq \; \cone(C^\top) = \{x \ | \ x = C^\top h \text{ for } h \geq 0\}$. \vspace{0.1cm} 
  
\item  There does not exist any orthogonal matrix $Q$ such that $\cone(C^\top) \subseteq \cone(Q)$, except for permutation matrices. (An orthogonal matrix $Q$ is a square matrix such that $Q^\top Q = I$.) 
\end{enumerate}
\end{definition}

Geometrically, separability requires that $\cone(C^\top)$ is the nonnegative orthant, while the SSC only requires $\cone(C^\top)$ to contain the second-order (ice-cream) cone tangent to every facet of the nonnegative orthant. 

\begin{theorem}\cite{huang2013non} \label{th:ssc} 
Let $R = C_\star S_\star^\top$ be an exact NMF of $R$ of size~$r$. 
If $C_\star$ and $S_\star^\top$ satisfy the SSC, 
then $R = C_\star S_\star^\top$ is fully identifiable. 
\end{theorem}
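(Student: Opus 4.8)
The plan is to reduce the uniqueness question to a statement about a single change-of-basis matrix $Q$, and then to show that the two SSC hypotheses force $Q$ to be a permutation matrix through a volume (determinant) argument, closed off by the second, ``no-rotation'' part of the SSC. First I would fix an arbitrary second Exact NMF $R = CS^\top$ of size $r$ and pass to the normalized setting of Section~\ref{sec:prelimgeo}, assuming $R$, $C$, $C_\star$, $S^\top$ and $S_\star^\top$ column stochastic. Since $r = \rank(R)$, both $C$ and $C_\star$ have full column rank with $\col(C) = \col(C_\star) = \col(R)$, so there is a nonsingular $Q \in \mathbb{R}^{r\times r}$ with $C = C_\star Q$. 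Substituting into $C_\star S_\star^\top = CS^\top = C_\star Q S^\top$ and cancelling the left-invertible $C_\star$ gives $S^\top = Q^{-1} S_\star^\top$, i.e.\ $S = S_\star Q^{-\top}$. The goal becomes: the only nonsingular $Q$ with $C_\star Q \ge 0$ and $S_\star Q^{-\top}\ge 0$ is a permutation matrix (scalings being absorbed by the normalization), which yields $C = C_\star \Pi$ and $S^\top = \Pi^\top S_\star^\top$, exactly as in Definition~\ref{def:uniqNMF}.

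The heart of the argument is to translate the nonnegativity constraints through conic duality and bound $\det Q$. Writing $q_k = Q(:,k)$, the constraint $C_\star Q \ge 0$ reads $C_\star q_k \ge 0$, i.e.\ $q_k \in (\cone(C_\star^\top))^*$. Part~1 of Definition~\ref{def:SSC} states $\mathcal{C} := \{x\ge 0 : e^\top x \ge \sqrt{r-1}\,\|x\|_2\} \subseteq \cone(C_\star^\top)$; dualizing reverses the inclusion, and a short computation identifies the dual of this rotated second-order cone as $\mathcal{C}^* = \{y : e^\top y \ge \|y\|_2\}$. Hence $q_k \in \mathcal{C}^*$, giving $\|q_k\|_2 \le e^\top q_k$, and column-stochasticity ($e^\top Q = e^\top$) forces $e^\top q_k = 1$, so $\|q_k\|_2 \le 1$; Hadamard's inequality then yields $|\det Q| \le \prod_k \|q_k\|_2 \le 1$. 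Applying the same reasoning to $S_\star$ and to $Q^{-\top}$, whose columns $w_k$ are the rows of $Q^{-1}$, gives $w_k \in \mathcal{C}^*$, hence $\|w_k\|_2 \le e^\top w_k = (Q^{-1}e)_k \ge 0$; since $e^\top Q^{-1} = e^\top$ forces $\sum_k (Q^{-1}e)_k = r$, the AM--GM inequality gives $\prod_k e^\top w_k \le 1$ and thus $|\det Q^{-\top}| \le \prod_k \|w_k\|_2 \le 1$, i.e.\ $|\det Q| \ge 1$. Combining the two bounds yields $|\det Q| = 1$, and equality throughout the Hadamard chain on the $C$-side forces the $q_k$ to be mutually orthogonal with $\|q_k\|_2 = 1$, so $Q$ is an orthogonal matrix.

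Finally I would invoke Part~2 of the SSC. With $Q$ orthogonal, $C_\star Q \ge 0$ means every row of $C_\star$ lies in $\{x : Q^\top x \ge 0\} = \cone(Q)$, hence $\cone(C_\star^\top) \subseteq \cone(Q)$; by Part~2 of Definition~\ref{def:SSC} this is possible only if $Q$ is a permutation matrix. Unwinding the normalization reintroduces the positive diagonal scaling, giving precisely the permutation-and-scaling ambiguity of Definition~\ref{def:uniqNMF}, which establishes full identifiability.

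I expect the main obstacle to be the two-sided determinant estimate in the second step: identifying $\mathcal{C}^*$ explicitly, checking that Part~1 of the SSC delivers the norm bound $\|q_k\|_2 \le e^\top q_k$ via conic duality, and verifying that Hadamard's inequality---together with the AM--GM step for the $S$-side, where $Q^{-1}e \ge 0$ and $\sum_k (Q^{-1}e)_k = r$---pins $|\det Q|$ to exactly $1$ so that the equality cases become available. A secondary point to treat carefully is the interaction between the column-stochastic normalization of Section~\ref{sec:prelimgeo} and the cone condition, making sure the SSC is used in the form in which it is assumed.
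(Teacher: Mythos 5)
First, a point of reference: the paper itself gives no proof of Theorem~\ref{th:ssc} --- it is quoted from \cite{huang2013non} --- so your proposal can only be compared with the proof in that reference. Your overall strategy (reduce to a change-of-basis matrix $Q$ with $C = C_\star Q$, $S = S_\star Q^{-\top}$; push nonnegativity through conic duality to place the columns of $Q$ and the rows of $Q^{-1}$ in $\mathcal{C}^* = \{y \,:\, e^\top y \ge \|y\|_2\}$; force $|\det Q| = 1$ via Hadamard and AM--GM; conclude orthogonality of $Q$ from the equality cases; finish with part~2 of the SSC) is precisely the argument used there, and each of those individual steps is sound.

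There is, however, a genuine gap at the very first step, exactly at the point you flagged as ``secondary'': the ``w.l.o.g.'' column-stochastic normalization of $C_\star$ and $S_\star^\top$ is not compatible with the SSC hypothesis, because part~1 of Definition~\ref{def:SSC} is \emph{not} invariant under column scaling of the factor. Replacing $C_\star$ by $C_\star D$ ($D$ a positive diagonal) turns the cone of its rows into $\cone\bigl((C_\star D)^\top\bigr) = D\,\cone(C_\star^\top)$, and $\mathcal{C} \subseteq \cone(C_\star^\top)$ does not imply $\mathcal{C} \subseteq D\,\cone(C_\star^\top)$. A concrete example with $r=3$: the matrix with rows $(1,0,0)$, $(0,1,0)$, $(0,1,4)$, $(1,0,4)$ has row cone $\{x \ge 0 : 4x_1 + 4x_2 \ge x_3\} \supseteq \mathcal{C}$ (because $(4,4,-1) \in \mathcal{C}^*$: $7 \ge \sqrt{33}$), but after the column-stochastic normalization of Section~\ref{sec:prelimgeo} the row cone becomes $\{x \ge 0 : x_1 + x_2 \ge x_3\}$, which misses the point $(1,1,4) \in \mathcal{C}$. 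Your argument needs \emph{both} the SSC for the normalized $C_\star$ (to get $q_k \in \mathcal{C}^*$) \emph{and} column stochasticity of $C$ and $C_\star$ simultaneously (to get $e^\top Q = e^\top$, hence $e^\top q_k = 1$ and $\sum_k (Q^{-1}e)_k = r$ for the AM--GM step); under the hypothesis as stated you can have one or the other, not both, so neither determinant bound follows as written. The repair is small but changes what gets normalized: keep $C_\star, S_\star$ exactly as given. From $C_\star Q \ge 0$ and SSC part~1 you already obtain $q_k \in \bigl(\cone(C_\star^\top)\bigr)^* \subseteq \mathcal{C}^*$, hence $e^\top q_k \ge \|q_k\|_2 > 0$ \emph{before} any normalization; now replace $(C,S)$ by $(C\Lambda^{-1}, S\Lambda)$ with $\Lambda = \diag(e^\top q_1, \dots, e^\top q_r)$, a legitimate move since identifiability is only claimed up to diagonal scaling. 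After this substitution $e^\top Q = e^\top$ holds, the columns of $Q$ remain in $\mathcal{C}^*$ (cones are invariant under positive scaling of their elements), and the rest of your chain --- Hadamard, AM--GM on the rows of $Q^{-1}$, the equality analysis, and SSC part~2 --- goes through verbatim.
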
 

It is out of the scope of this paper to discuss in details the geometric interpretation of the SSC. An important issue with the SSC is that it is NP-hard to check in general~\cite{huang2013non}. 
We refer the reader to~\cite{huang2013non}, \cite{xiao2019uniq} and~\cite[Chapter 4]{gillis2020nonnegative} for more detail. 
We will briefly compare the SSC with our conditions in Remark~\ref{rem:SSC}. 

Other full identifiability results for NMF are based on sparsity conditions; see the recent paper~\cite{abdolali2021simplex} and the references therein.

\subsection{Partial identifiability}  \label{sec:sfs}

In the MCR literature, the set of feasible solutions (SFS) of Exact NMF, a.k.a.\ the feasible regions (FRs), has been extensively studied, especially in small dimensions ($r=3,4$)~\cite{Golshan2016review3compRFS}. 
Several algorithms exist, the best-known one is the first developed for $r=2$ by Lawton and Sylvestre~\cite{lawton1971self} who introduced and coined the special term Self Modeling Curve Resolution (SMCR) for finding all feasible solutions for a matrix decomposition with the nonnegativity constraint. 
In general, the goal of MCR (and NMF) algorithms is to compute one set of particular profiles (that is, generate one solution) without considering the fact that other profiles (solutions) may exist with the same properties (namely, satisfying the same constraints and having the same objective function value). 
Rerunning several times these algorithms with different initializations can help to detect the non-uniqueness, however, in general, this process cannot generate the SFS.  
For $r=3$, after several randomized/approximate trials, Borgen and Kowalski~\cite{Borgen1985compres3comp} published an analytical solution using the tangent and the simplex rotation algorithms. These algorithms were found mathematically hard to understand and implement for non-mathematicians, e.g., chemists, thus it was not developed further although being cited in the chemometrics literature for 20 years. Rajk{\'o} and Istv{\'a}n~\cite{Rajko2005analsol3comp} revised Borgen’s study and could enlighten the concepts based on the geometry of the abstract space. Computational geometry tools (including convex hulls, Fourier-Motzkin elimination, double-description) were used for developing the algorithm to draw Borgen-Rajk{\'o} plots. The
systematic grid search method was introduced to approximate the 
SFS/FRs numerically first for two-component systems~\cite{Vosough2006gridsearch}, and subsequently it was extended for three-component systems as well~\cite{Golshan2011grid3}. Sawall
et al.~\cite{Sawall2013polinfl3} developed the polygon inflation algorithm for three-component systems as a faster and more accurate alternative to the grid search. The duality concept was first used for calculating SFS/FRs for SMCR by Beyramisoltan and her coworkers~\cite{Beyramysoltan2014BPduality}. For $r=4$, the first attempt appeared in 2013~\cite{Golshan2013} using the triangle enclosure method to approximate the boundary of the two-dimensional slices. Later in 2016, Sawall et al.~\cite{Sawall2016comp234ch} introduced the polyhedron inflation method as the generalization of the polygon inflation one. The chapter~\cite{Sawall2016comp234ch}, and a subsequent paper~\cite{Neymeyr2019NMFg} from the same research group, provided the most comprehensive summary for the SFS/FRs and related concepts up to now. See also~\cite{laursen2022sampling} for a recent sampling algorithm for larger values of~$r$, and \revise{\cite{andersons2022analytical} for an improved algorithm for the boundary curve construction along with an  
implementation}.   

\paragraph{Necessary condition for partial identifiability} 

Interestingly, the necessary condition for the identifiability for Exact NMF based on the supports of the columns of $C$ can be extended to the partial identifiability case. Note that this result is, to the best of our knowledge, not present in the literature, although it follows directly from the proof of Theorem~\ref{th:nec}. 
\begin{theorem} \label{th:necpartial} 
Let $R = C_\star S_\star^\top$ be an exact NMF of $R$ of size~$r$. 
If the $k$th column of $C_\star$ (resp.\ $S_\star$) is identifiable, then the support of the $k$th column of $C_\star$  (resp.\ $S_\star$) does not contain the support of any other column of $C_\star$ (resp.\ $S_\star$). 
\end{theorem}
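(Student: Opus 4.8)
The plan is to prove the contrapositive and to recycle, almost verbatim, the perturbation argument used for Theorem~\ref{th:nec}. Concretely, I would assume that the support of the $k$th column of $C_\star$ contains the support of some other column, say $\supp(C_\star(:,\ell)) \subseteq \supp(C_\star(:,k))$ with $\ell \neq k$, and then exhibit a single alternative Exact NMF $R = C_\epsilon S_\epsilon^\top$ of size $r$ in which no column of $C_\epsilon$ is a positive scalar multiple of $C_\star(:,k)$. By Definition~\ref{def:partialUniqNMF}, producing one such factorization already certifies that the $k$th column is not identifiable, which is exactly the contrapositive of the claim. The statement for $S_\star$ then follows by duality, using $R^\top = S_\star C_\star^\top$.

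For the construction I would set, exactly as in Theorem~\ref{th:nec},
\[
C_\epsilon(:,k) = C_\star(:,k) - \epsilon\, C_\star(:,\ell),
\qquad
S_\epsilon(:,\ell) = S_\star(:,\ell) + \epsilon\, S_\star(:,k),
\]
and leave every other column of $C_\star$ and $S_\star$ untouched. The support containment guarantees $C_\epsilon(:,k) \geq 0$ for all $\epsilon > 0$ small enough: on rows outside $\supp(C_\star(:,k))$ both terms vanish, and on the remaining rows $C_\star(:,k)$ is strictly positive, so the bound $\epsilon \leq \min_{i:\,C_\star(i,\ell)>0} C_\star(i,k)/C_\star(i,\ell)$ suffices, while $S_\epsilon(:,\ell) \geq 0$ is immediate. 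A one-line expansion shows the two $\epsilon$-terms cancel, so $C_\epsilon S_\epsilon^\top = C_\star S_\star^\top = R$ and this is a genuine Exact NMF of size $r$.

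It then remains to check that none of the columns of $C_\epsilon$ is a positive multiple of $C_\star(:,k)$. Here I would invoke the standing assumption $r = \rank(R)$: since $R = C_\star S_\star^\top$ forces $\rank(C_\star) = r$, the columns of $C_\star$ are linearly independent. Consequently $C_\epsilon(:,k) = C_\star(:,k) - \epsilon C_\star(:,\ell)$ cannot equal $\alpha C_\star(:,k)$ for any $\alpha$ (that would force $\epsilon = 0$), and for each $j \neq k$ the untouched column $C_\epsilon(:,j) = C_\star(:,j)$ is not proportional to $C_\star(:,k)$ either. This exhibits the required alternative factorization and closes the argument.

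The step I expect to demand the most care is precisely this last proportionality check, since the whole claim rests on the columns of $C_\star$ being linearly independent, and this is where the hypothesis $r = \rank(R)$ enters essentially: without it (e.g.\ if $C_\star$ had two columns proportional to each other) the perturbed column could land on a scalar multiple of $C_\star(:,k)$ and the construction would no longer witness non-identifiability. Everything else is the routine nonnegativity bookkeeping already performed in Theorem~\ref{th:nec}.
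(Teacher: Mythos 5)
Your proof is correct and follows essentially the same route as the paper: the paper's proof of Theorem~\ref{th:necpartial} simply points to the perturbation argument of Theorem~\ref{th:nec}, which is exactly the construction $C_\epsilon(:,k)=C_\star(:,k)-\epsilon\, C_\star(:,\ell)$, $S_\epsilon(:,\ell)=S_\star(:,\ell)+\epsilon\, S_\star(:,k)$ that you use. If anything, you are more explicit than the paper in verifying --- via $\rank(C_\star)=r$, which follows from the standing assumption $r=\rank(R)$ --- that no column of $C_\epsilon$ is a positive multiple of $C_\star(:,k)$, a point the paper leaves implicit.
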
 
\begin{proof}
This proof is similar to that of Theorem~\ref{th:nec}.  
\end{proof}

\paragraph{Sufficient conditions for partial identifiability: DBU and restricted DBU theorems} 

In the paper~\cite{rajko2015definition}, a partial identifiability result for NMF is presented and discussed; it is called the data-based uniqueness (DBU) concept.  Data-based means there that it does not only use the estimated profiles, but also the data generated by them and all feasible profiles. 
It was formulated based on band solutions, that is, 
using not just a particular set of estimated profiles (that is, a particular Exact NMF solution), but all feasible solutions based on SMCR (that is, the corresponding NPP with feasible regions a.k.a.\ Borgen-Rajk\'o plots~\cite{Golshan2016review3compRFS}). 
Thus the SFS/FRs are needed to use the original DBU concept~\cite{rajko2015definition}. 
However, as explained in Section~\ref{sec:sfs}, there are working algorithms to get SFS/FRs only for up to  four-component systems. 
This fact inspired the development of the particular-profile DBU or Restricted DBU~\cite{lakeh2022predicting} that uses a particular solution. 
It was a step back to the profile-based concept, also used in different ways by Maeder~\cite{maeder1987evolving}, 
Malinowski~\cite{malinowski1992window}, 
and Manne~\cite{manne1995resolution}. The concept was intended to offer for practitioners (such as analytical chemists) which is why both papers~\cite{rajko2015definition,lakeh2022predicting} were published in \emph{Analytica Chimica Acta}, thus the rigorous mathematical descriptions are missing. 
In the following, the lack of the formal descriptions and proofs will be remedied. 

The idea~\cite{lakeh2022predicting} was restricted to analyze a particular solution relying on the following two conditions: Given $R = C_\star S_\star^\top$ where 
$C_\star \in \mathbb{R}^{m \times r}_+$ and 
$S_\star \in \mathbb{R}^{n \times r}_+$ with \mbox{$\rank(R) = r$},  
\begin{itemize}
    \item  (Zero-region window) There exists a row of $C_\star$, say the $i$th, such that $C_\star(i,k) = 0$ and all feasible profiles $C(i,p) > 0$ for all $p \neq k$. 

    \item (Selective window) There exists a row of $S_\star$, say the $j$th, 
    such that $S_\star(j,:) = \alpha e_{(k)}^\top$ for some $\alpha > 0$, 
    that is, $S_\star(j,k) > 0$ and all feasible profiles $S(j,p) = 0$ for all $p \neq k$. 
    
\end{itemize}

Let us comment on the two conditions above: 
\begin{enumerate}
    \item Zero-region window: This condition means that $C_\star(:,k)$ contains an entry equal to zero where all other entries of $C_\star$ in the same row are positive.  
    Geometrically, this means that $C_\star(:,k)$ is the only column of $C_\star$ on some facet of the nonnegative orthant. 
    
    \item Selective window: This condition means  that there exists a column of $R$, say the $j$th, such that $R(:,j) = \gamma C(:,k)$ for some $\gamma > 0$. In words, it   means that the $k$th column of $C$ appears, up to scaling, in the data set. This is closely related to the separability condition in the NMF literature; see the previous Section~\ref{sec:fullid}.  
    In fact, all columns of $C_\star$ satisfy the selective window condition if and only if $S_\star^\top$ is separable. 
\end{enumerate}

As was mentioned above, the restricted DBU concept in~\cite{lakeh2022predicting} does not have a formal statement nor a formal proof. 
Authors provide an informal one with explanations, focusing on the intuitions behind their result, which is more suitable for not mathematically trained practitioners.

\section{Partial identifiability theorems for Exact NMF} \label{sec:first}

In this section, we 
propose {a rigorous statement and proof for the restricted DBU concept from~\cite{lakeh2022predicting}}; see Theorem~\ref{mainth} (Section~\ref{sec:mainth}). 
In Section~\ref{sub:geoThmain}, we provide a geometric interpretation of Theorem~\ref{mainth}. This leads us to a new partial identifiability result for Exact NMF, Theorem~\ref{mainthgeo0}, in Section~\ref{sec:mainth2}. 
In Section~\ref{sec:recurDBU}, we show how to apply Theorems~\ref{mainth} and~\ref{mainthgeo0} to allow the identifiability of more than one column of $C_\star$; see Theorem~\ref{mainth2}. 
Finally, in Section~\ref{sec:partialr3}, 
we use the geometric interpretation of Theorem~\ref{mainthgeo0} to obtain a new partial identifiability result for the special case $r=3$; see Theorem~\ref{corthgeo}.

\subsection{Restricted DBU theorem} \label{sec:mainth}

Let us state and prove the restricted DBU theorem. 


\begin{theorem}[Restricted DBU theorem] \label{mainth}
Let $R = C_\star S_\star^\top$ where 
$C_\star \in \mathbb{R}^{m \times r}_+$ and 
$S_\star \in \mathbb{R}^{n \times r}_+$ with $\rank(R) = r$. 
The $k$th column of $C_\star$ is identifiable if the following two conditions hold: 
\begin{itemize}
    \item  (Full-rank zero-region window--FRZRW) 
    Let $\mathcal{I} = \{ i \ | \ C_\star(i,k) = 0  \}$ be the complement of 
    the support of the $k$th column of $C_\star$.  
The submatrix of $C_\star$ formed by the rows indexed by $\mathcal{I}$ has rank $r-1$, 
that is, $\rank\big( C_\star(\mathcal{I},:) \big) = r-1$. 
    
    \item (Selective window) There exists a row of $S_\star$, say the $j$th, such that $S_\star(j,:) = \alpha e_{(k)}^\top$ for $\alpha > 0$.  
\end{itemize}
\end{theorem}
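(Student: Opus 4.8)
The plan is to work with a fixed but arbitrary competing factorization and reduce the entire claim to a statement about the change-of-basis matrix between $C$ and $C_\star$. Throughout I may assume, as in Section~\ref{sec:prelimgeo}, that $R$, $C_\star$ and $S_\star^\top$ are column stochastic (this only simplifies bookkeeping), and I keep $k$ for the index of the column under consideration. First I would fix any other Exact NMF $R = CS^\top$ of size $r$. Since $\rank(R) = r$, both $C$ and $C_\star$ have full column rank $r$ and $\col(C) = \col(C_\star) = \col(R)$; hence there is a unique invertible matrix $M \in \mathbb{R}^{r \times r}$ with $C = C_\star M$. Cancelling $C_\star$, which is left-invertible, in $C_\star S_\star^\top = R = C_\star M S^\top$ yields $S_\star^\top = M S^\top$. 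The problem then becomes: \emph{show that $M$ has a column equal to $\alpha' e_{(k)}$ for some $\alpha' > 0$}, because $C(:,l) = C_\star M(:,l) = \alpha' C_\star(:,k)$ is exactly the conclusion of Definition~\ref{def:partialUniqNMF}.

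Next I would extract the algebraic content of the two hypotheses. From the selective window, the $j$th column of $S_\star^\top = M S^\top$ reads $\alpha e_{(k)} = M s$ with $s := S(j,:)^\top \geq 0$; in particular $s \neq 0$. Reading the same row on the $R$ side gives $R(:,j) = C_\star S_\star(j,:)^\top = \alpha C_\star(:,k)$, so that $R(i,j) = 0$ for every $i \in \mathcal{I} = \{i : C_\star(i,k)=0\}$. Writing instead $R(:,j) = C S(j,:)^\top = Cs$ and using $C \geq 0$, $s \geq 0$, the vanishing of $R(i,j)$ for $i \in \mathcal{I}$ forces every term to vanish: $C(i,l)\,s_l = 0$ for all $i \in \mathcal{I}$ and all $l$. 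Consequently, for each index $l$ in the (nonempty) support of $s$ we obtain $C(\mathcal{I},l) = 0$.

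The heart of the argument is then to feed $C(\mathcal{I},l) = 0$ into the FRZRW condition. Set $A := C_\star(\mathcal{I},:)$; its $k$th column vanishes by definition of $\mathcal{I}$, so $0 = C(\mathcal{I},l) = A\,M(:,l) = \sum_{p \neq k} A(:,p)\,M(p,l)$. The FRZRW hypothesis $\rank(A) = r-1$ says precisely that the $r-1$ columns $\{A(:,p) : p \neq k\}$ are linearly independent, hence $M(p,l) = 0$ for all $p \neq k$; that is, $M(:,l) = M(k,l)\,e_{(k)}$ for every $l$ in the support of $s$. Since $M$ is invertible no column can vanish, so $M(k,l) \neq 0$, and the nonnegativity of $C(:,l) = M(k,l)\,C_\star(:,k)$ together with $C_\star(:,k) \geq 0$, $C_\star(:,k) \neq 0$ pins down the sign $M(k,l) > 0$. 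Taking $\alpha' = M(k,l)$ finishes the proof.

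I expect the main obstacle to be exactly the coupling of the two hypotheses in the last paragraph: the selective window only produces the zero pattern $C(\mathcal{I},l)=0$ for the columns $l$ appearing in the nonnegative combination $s$, and one must verify that this support is nonempty; the FRZRW rank condition then has to supply \emph{exactly} the $r-1$ independent relations needed to collapse $M(:,l)$ onto the $e_{(k)}$ axis. A secondary point requiring care is the sign bookkeeping forced by nonnegativity, which is what upgrades $M(k,l) \neq 0$ to $M(k,l) > 0$ and guarantees $\alpha'$ is genuinely positive. This algebraic route is equivalent to the geometric statement that the normalized $C_\star(:,k)$ is a vertex of $\Delta \cap \col(R)$ lying inside $\conv(C)$ and hence a column of $C$, but the computation above avoids invoking the later Lemma~\ref{lemFRZRWvert}.
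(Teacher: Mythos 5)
Your proof is correct, and after the common opening step it takes a genuinely different route from the paper's. Both arguments begin identically: the selective window gives $R(:,j) = \alpha C_\star(:,k) = C\,S(j,:)^\top$, and nonnegativity forces $C(\mathcal{I},l) = 0$ for every $l$ in the support of $s = S(j,:)^\top$. From there the paper stays with the reconstruction equation and runs a rank/counting argument on the data: with $\mathcal{K} = \{1,\dots,r\}\setminus\{k\}$, it proves $\rank\big(R(\mathcal{I},:)\big) = r-1$ from the factorization $R(\mathcal{I},:) = C_\star(\mathcal{I},\mathcal{K}) S_\star(:,\mathcal{K})^\top$ — a step that uses the FRZRW condition \emph{together with} $\rank\big(S_\star(:,\mathcal{K})\big) = r-1$ — and concludes that at least $r-1$ columns of $C$ must be nonzero on the rows $\mathcal{I}$, hence that exactly one column of $C$ can appear in the reconstruction of $R(:,j)$, and that column must be $\alpha C_\star(:,k)$ up to positive scaling. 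You instead pass to the invertible transition matrix $M$ (with $C = C_\star M$ and $S_\star^\top = M S^\top$) and read FRZRW as linear independence of the columns of $C_\star(\mathcal{I},\mathcal{K})$: the relation $C_\star(\mathcal{I},:)\,M(:,l) = 0$ then collapses $M(:,l)$ onto the $e_{(k)}$-axis for each $l \in \supp(s)$, and invertibility of $M$ plus nonnegativity of $C$ and $C_\star(:,k) \neq 0$ pin down the positive scalar. Your route buys a shorter, purely change-of-basis argument that never needs $\rank\big(R(\mathcal{I},:)\big)$ or the rank of $S_\star(:,\mathcal{K})$ explicitly, and it yields slightly more: every column of $C$ with $S(j,l) > 0$ is a positive multiple of $C_\star(:,k)$, and there is exactly one such column since an invertible $M$ cannot have two proportional columns. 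The paper's route, in exchange, isolates the quantity $\rank\big(R(\mathcal{I},:)\big) = r-1$, a property of the data matrix itself, which is precisely the object that the geometric interpretation (Lemma~\ref{lemFRZRWvert}) and the subsequent NPP discussion build on.
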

\begin{proof} Let $R = CS^\top$ be an Exact NMF of $R$ of size $r$, that is, 
$C \in \mathbb{R}^{m \times r}_+$ and 
$S \in \mathbb{R}^{n \times r}_+$. 
We need to show that $C(:,\ell) = \beta C_\star(:,k)$ for some $\ell$ and some $\beta > 0$. 
Since $R = CS^\top = C_\star S_\star^\top$, 
and since $S_\star(j,:) = \alpha e_{(k)}^\top$ for some $\alpha > 0$ (selective window condition)
, we have 
\begin{equation} \label{decompXj}
R(:,j) 
= 
C_\star S_\star(j,:)^\top  
= 
\alpha C_\star(:,k) 
= CS(j,:)^\top  
= \sum_{p=1}^r C(:,p) S(j,p). 
\end{equation} 
Let us denote the set of indices corresponding to columns of $C$ that have zero elements in $\mathcal{I}$ as 
\[
\mathcal{P} = \{ p \ | \ C(\mathcal{I},p) = 0 \} \subseteq \{1,2,\dots,r\}, 
\] 
and $\xoverline[0.9]{\mathcal{P}} = \{1,2,\dots,r\}  \backslash \mathcal{P}$ its complement. 
By nonnegativity of all the terms in~\eqref{decompXj}, 
$S(j,p) = 0$ for all $p \in \xoverline[0.9]{\mathcal{P}}$, otherwise a zero entry of $C_\star(:,k)$ is approximated by a positive one, since $C(\mathcal{I},p) \neq 0$ for $p \in \xoverline[0.9]{\mathcal{P}}$. 
Note that $|\mathcal{P}| \geq 1$ otherwise $C_\star(:,k)$ cannot be reconstructed, since $C_\star(:,k) \neq 0$ as $\rank(C_\star) = r$. 
Below, we prove that $|\xoverline[0.9]{\mathcal{P}}| \geq r-1$, and hence  $|\mathcal{P}| \leq 1$. This will imply that $|\mathcal{P}| = 1$, that is, $\mathcal{P} = \{\ell\}$ for some $\ell$. Putting this back into~\eqref{decompXj}, this gives 
 \[
\alpha C_\star(:,k) = 
\sum_{p \in \mathcal{P} = \{\ell\}} C(:,p) S(j,p) 
+ 
\sum_{p \in \xoverline[0.9]{\mathcal{P}}} 
C(:,p) \underbrace{S(j,p)}_{=0}  
= 
C(:,\ell) S(j,\ell), 
 \] 
 where $S(j,\ell) > 0$ since $C_\star(:,k) \neq 0$ and $\alpha > 0$. 
 Finally, 
 $C(:,\ell) = \frac{\alpha}{S(j,\ell)} C_\star(:,k)$ which completes the proof. 

It remains to show that $|\xoverline[0.9]{\mathcal{P}}| \geq r-1$. 
For this, let us show that the rank of $R(\mathcal{I},:)$ is $r-1$. First, note that $\rank(R) = \rank(C_\star S_\star) = \rank(C_\star) = \rank(S_\star) = r$, by the conditions that $\rank(R) = r$, $R = C_\star S_\star^\top$, both $C_\star$ and $S_\star$ have $r$ columns.  
Then, 
\[
R(\mathcal{I},:) 
= C_\star(\mathcal{I},:) S_\star^\top 
= \sum_{p \neq k} C_\star(\mathcal{I},p) S_\star(:,p)^\top  
= C_\star(\mathcal{I},\mathcal{K}) S_\star(:,\mathcal{K})^\top, 
\] 
where $\mathcal{K} = \{1,2,\dots,r\} \backslash \{k\}$. 
By the FRZRW condition,  $\rank\big( C_\star(\mathcal{I},\mathcal{K}) \big) = r-1$, while we have  $\rank\big( S_\star(:,\mathcal{K}) \big) = r-1$ since it is made of $r-1$ columns of $S_\star$ that has rank $r$. 
Since both factors in the decomposition 
$R(\mathcal{I},:)  = C_\star(\mathcal{I},\mathcal{K}) S_\star(:,\mathcal{K})^\top$ have full rank $r-1$, 
$\rank \big( R(\mathcal{I},:) \big) = r-1$. 
Now, since $R = CS^\top$, we also have 
\[
R(\mathcal{I},:) 
= C(\mathcal{I},:) S^\top  
= 
C(\mathcal{I},{\mathcal{P}} ) 
S(:, {\mathcal{P}} )^\top 
+ 
C(\mathcal{I},\xoverline[0.9]{\mathcal{P}} ) 
S(:,\xoverline[0.9]{\mathcal{P}})^\top
= 
C(\mathcal{I},\xoverline[0.9]{\mathcal{P}} ) 
S(:,\xoverline[0.9]{\mathcal{P}})^\top, 
\] 
since $C(\mathcal{I},\mathcal{P}) = 0$, by definition. As shown above, 
$\rank\big( R(\mathcal{I},:) \big) = r-1$.   
This implies that $C(\mathcal{I},\xoverline[0.9]{\mathcal{P}} )$ has at least $r-1$ columns, that is, $|\xoverline[0.9]{\mathcal{P}}| \geq r-1$. 
\end{proof} 

Let us illustrate Theorem~\ref{mainth} on a simple example. 

\begin{example} \label{ex1revis}
Let us consider 
\[ 
R = 
\underbrace{\left( 
\begin{array}{ccc}
      2 & 2 & 2   \\
      1 & 3 &  1 \\ 
      1 & 1 & 3 \\ 
      0 & 2 & 2 \\ 
      {0} & {1} & {2} \\   
\end{array}
\right)}_{C_\star}
\underbrace{\left( 
\begin{array}{ccc}
     1 & 0 & 0  \\
     0 & 1 & 0 \\ 
     0 & 0 & 1 
\end{array}
\right)}_{S_\star^\top}. 
\]
By Theorem~\ref{mainth}, the first column of $C_\star$ is uniquely identifiable, since the two conditions of Theorem~\ref{mainth} are satisfied: 
\begin{enumerate}

\item (Full-rank zero-region window--FRZRW) 
$C_\star(\mathcal{I},1) = 0$ for $\mathcal{I} = \{4,5\}$ while 
    \[
\rank \big( C_\star(\mathcal{I}, \mathcal{K}) \big) 
= \rank \left( 
\begin{array}{cc}
     2 & 2   \\
     1 & 2  
\end{array}
\right) 
= 2, \quad \text{ where } \mathcal{K} = \{2,3\}. 
\]  

    \item (Selective window) $S(1,:) = e_{(1)}^\top$ so that $R(:,1) = C_\star(:,1)$. 

\end{enumerate}

\end{example} 

\begin{remark}
In the example above, $S_\star$ is the identity matrix with $n=r$, which is  not realistic, is not a very interesting NMF decomposition (it is the trivial decomposition, $R = RI$), and would be useless in practice. 
However, for our purpose, such examples are enough. 
One could add any number of rows to $S_\star$ and replace the identity matrix by a diagonal matrix to make it more realistic, 
but it would not change our observations and discussions about the identifiability.
\end{remark}

\begin{remark}
The strengthen FRZRW condition compared to the zero-region window condition used in~\cite{lakeh2022predicting} comes from the fact that Theorem~\ref{mainth} provides a global uniqueness result. The result in~\cite{lakeh2022predicting} implicitly focuses on locally unique (a.k.a.\ locally rigid) solutions; see~\cite{krone2021uniqueness} for more details on local uniqueness and rigidity of Exact NMF solutions.   
\revise{For example, 
\begin{equation} \label{eq:coutnerex}
R =  
\underbrace{\left( 
\begin{array}{ccc}
      2 & 2 & 2   \\
      1 & 3 &  1 \\ 
      1 & 1 & 3 \\ 
      0 & 2 & 2 
\end{array}
\right)}_{=C_\star}
\underbrace{\left( 
\begin{array}{ccc}
     1 & 0 & 0  \\
     0 & 1 & 0 \\ 
     0 & 0 & 1 
\end{array}
\right)}_{=S_\star^\top}  
\end{equation} 
satisfies the zero-region window and selective window conditions 
for 
the first column of $C_\star$:  
the last row, $[0,2,2]$, is a zero-region window 
while 
the first row of $S_\star$ is $e_{(1)}\top$ and hence is 
a selective window.  
However, this first column is not uniquely identifiable, up to scaling, as there exists another decomposition where that column does not appear 
(up to scaling):   
\begin{equation} \label{eq:coutnerex2}
R = 
\left( 
\begin{array}{ccc}
      2 & 2 & 2   \\
      1 & 3 &  1 \\ 
      1 & 1 & 3 \\ 
      0 & 2 & 2 
\end{array}
\right)
= 
\left( 
\begin{array}{ccc}
      1 & 1 & 0   \\
      1 & 0 &  1 \\ 
      0 & 1 & 1 \\ 
      0 & 0 & 2 
\end{array}
\right)
\left( 
\begin{array}{ccc}
     1 & 2 & 0  \\
     1 & 0 & 2 \\ 
     0 & 1 & 1 
\end{array}
\right). 
\end{equation}  
However, in the first factorization, in~\eqref{eq:coutnerex}, the first column of $R$ is actually locally unique: any nearby Exact NMF factorization must contain $R(:,1)$ as a column, up to scaling. Note that, in the second factorization above, in~\eqref{eq:coutnerex2}, all columns of $R$ are locally partially identifiable; see Figure~\ref{counterex} for the corresponding NPP instance. 
\begin{figure}[ht!]
\begin{center}
\includegraphics[width=0.5\textwidth]{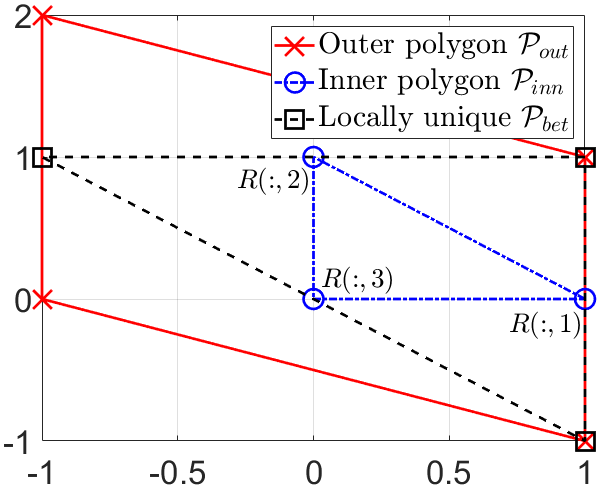} 
\caption{
Illustration of the NPP instance corresponding to the matrix $R$ in~\eqref{eq:coutnerex}. 
The nested polygon corresponding to the trivial factorization in~\eqref{eq:coutnerex}, $R= R\ I$,  is 
$\mathcal{P}_{inn}$ itself, 
while the nested polygon corresponding to the   factorization in~\eqref{eq:coutnerex2}
is denoted $\mathcal{P}_{bet}$.}.\label{counterex}
\end{center}
\end{figure} }
 An interesting direction of research would be to analyze conditions under which solutions are partially locally unique.  
\end{remark}

\begin{remark}[Full-rank zero-region window and sufficiently scattered condition] \label{rem:SSC}

It turns out the FRZRW condition for each column of $C$ is a necessary condition for the SSC. 
In fact, the SSC requires that $C$ has at least $r-1$ zero per column, 
while the submatrix $C(\mathcal{I},\mathcal{K})$ (using the same notation as in the proof of  Theorem~\ref{mainth}) needs to contain the all-one vector in its relative interior~\cite[Theorem 4.28]{gillis2020nonnegative} which requires that the rank of $C(\mathcal{I},\mathcal{K})$ is equal to $r-1$. 

Hence the SSC is stronger than the FRZRW condition. However, if both $C_\star$ and $S^\top$ satisfy the SSC, the Exact NMF of $R = CS^\top$ is unique, which is not the case for the FRZRW condition. For example, the following matrix 
\[
C_\star^\top = \left( 
\begin{array}{cccccc}
    1 & 0 & 0 & 1 & 1 & 0 \\ 
    0 & 1 & 0 & 1 & 0 & 1 \\ 
    0 & 0 & 1 & 0 & 1 & 1 \\ 
     1 & 1 & 1 & 0 & 0 & 0 \\
\end{array}
\right) 
\]
satisfies the FRZRW condition, but $CC^\top$ does not admit a unique NMF, e.g., $C_\star C_\star^\top = CC^\top$ where 
\[
C^\top =  \left( 
\begin{array}{cccccc}
    1    & 1   &  0  &    1 &    0  &   0\\ 
     1   &  0  &   1 &    0 &    1   &  0\\ 
     0   &  1  &   1 &    0  &   0  &   1\\  
     0   &  0   &  0 &    1 &    1  &   1
\end{array}
\right), 
\] 
see~\cite[Example 4.29]{gillis2020nonnegative}. 
\end{remark} 

In the next section,  Section~\ref{sub:geoThmain}, we show that Theorem~\ref{mainth} has a simple geometric interpretation in terms of the NPP. This will be used to obtain a new partial identifiability result for Exact NMF in Section~\ref{sec:mainth2} (Theorem~\ref{mainthgeo0}). 


\subsection{Geometric interpretation of  the restricted DBU theorem (Theorem~\ref{mainth})} \label{sub:geoThmain}

Let us consider an NPP with $\mathcal{P}_{inn} \subseteq \mathcal{P}_{out}$, and make the following simple observation.  
If a vertex, $v$, of the inner polytope, $\mathcal{P}_{inn}$, coincide with a vertex of the outer polytope, $\mathcal{P}_{out}$, then it has to belong to any nested polytope, $\mathcal{P}_{bet}$. In fact, since 
$\mathcal{P}_{inn} \subseteq \mathcal{P}_{bet} \subseteq \mathcal{P}_{out}$ and $v \in \mathcal{P}_{inn} \cap \mathcal{P}_{out}$, we must have $v \in \mathcal{P}_{bet}$. 

It turns out the conditions of Theorem~\ref{mainth} (namely, the selective window and FRZRW conditions) are equivalent to the condition that the inner and outer polytopes in the corresponding NPP share a vertex, but written in algebraic terms. Let us prove this equivalence. We will then use this geometric insight to provide a new partial identifiability result in Section~\ref{sec:mainth2}. 

In this section, we work on the outer polytope directly obtained from the reduction from Exact NMF to the NPP; see Section~\ref{sec:prelimgeo}. 
It is given by 
\begin{align*}
\mathcal{C} = \col(C) \cap \Delta & 
= \{ x \ | \  
x = Cz \geq 0, e^\top x = 1 \}  = \{ Cz \ | \  
Cz  \geq 0, e^\top z = 1 \}, 
\end{align*} 
where $C$ is normalized to be column stochastic, so that $x = Cz$ is column stochastic if and only if $e^\top z = 1$, since 
$e^\top x = e^\top C z = e^\top z$.  
It will be useful to note that the facets of $\mathcal{C}$ have the form 
$\{ Cz \in \mathcal{C} \ | \ (Cz)_i = C(i,:)z = 0\}$  for some~$i$.  

Let us define the smallest dimensional face 
of the outer polytope, $\mathcal{C}$, containing a given point. 
\begin{definition}[Minimal face of $\mathcal{C}$ containing $y$] Given a column stochastic matrix, $C \in \mathbb{R}_+^{m \times r}$, 
and the vector $y \in \mathcal{C} = \col(C) \cap \Delta$, we define 
\begin{align}
\mathcal{F}_C(y) 
& = 
\{ x \in \mathcal{C} \ | \ \supp(x) \subseteq \supp(y) \}.   \label{facety}
\end{align} 
\end{definition} 
The set $\mathcal{F}_C(y)$ can be characterized as follows 
\[
\mathcal{F}_C(y) 
= 
\{ Cz \ | \ 
 z \in \mathbb{R}^r, 
 Cz \geq 0,
(Cz)_i = 0 \text{ when } y_i = 0, 
z^\top e = 1 \}. 
\]  
This means that all the points in $\mathcal{F}_C(y)$ have to belong to the same facets of $\mathcal{C}$ as $y$. 
Hence $\mathcal{F}_C(y)$ is the face of $\mathcal{C}$ of minimal dimension containing $y$, because a face of a polytope is obtained by intersecting a subset of its facets.  
Note that a vertex of a poltyope is a 0-dimensional face, and 
hence $y$ is a vertex of $\mathcal{C}$ if and only if $\mathcal{F}_C(y) = \{ y \}$.  


Let us now prove that the FRZRW condition of Theorem~\ref{mainth} is equivalent to the fact that $C(:,k)$ is a vertex of $\mathcal{C} = \col(C) \cap \Delta$, that is, $\mathcal{F}_{C}\big( C(:,k) \big) = \big\{ C(:,k) \big\}$. 
Note that the selective window assumption of Theorem~\ref{mainth} will require that the inner polytope has a vertex corresponding to $C(:,k)$. 
\begin{lemma} \label{lemFRZRWvert} Given a nonsingular column stochastic matrix, $C \in \mathbb{R}_+^{m \times r}$, 
the FRZRW condition on the $k$th column of $C$ is equivalent 
to the following geometric condition 
\begin{equation} \label{geoFRZRW}   
\mathcal{F}_{C}\big( C(:,k) \big) 
\; = \; \big\{ C(:,k) \big\}. 
\end{equation}
\end{lemma}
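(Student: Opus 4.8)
The plan is to translate the geometric condition \eqref{geoFRZRW} into a statement about the null space of $C(\mathcal{I},:)$ and then read off the equivalence from a dimension count. Write $y = C(:,k)$, so that $y_i = 0$ exactly when $i \in \mathcal{I}$. Feeding this into the characterization of $\mathcal{F}_C(y)$ stated just above the lemma, the defining constraint ``$(Cz)_i = 0$ when $y_i = 0$'' becomes $C(\mathcal{I},:) z = 0$, so
\begin{equation*}
\mathcal{F}_C\big(C(:,k)\big) = \big\{ Cz \ | \ C(\mathcal{I},:)z = 0, \ Cz \geq 0, \ e^\top z = 1 \big\}.
\end{equation*}
Since $C$ is nonsingular, the linear map $z \mapsto Cz$ is injective, so it suffices to work in the $z$-variable: setting $Z = \{ z \ | \ C(\mathcal{I},:)z = 0,\ Cz \geq 0,\ e^\top z = 1\}$, the geometric condition \eqref{geoFRZRW} is equivalent to $Z = \{ e_{(k)} \}$. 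Note that $e_{(k)} \in Z$ always holds, because $C(\mathcal{I},k) = 0$, $C e_{(k)} = C(:,k) \geq 0$, and $e^\top e_{(k)} = 1$.

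First I would record an unconditional bound: $\rank\big(C(\mathcal{I},:)\big) \leq r-1$, since the $k$th column $C(\mathcal{I},k)$ vanishes by definition of $\mathcal{I}$, so the remaining $r-1$ columns already span the column space. Hence the FRZRW condition $\rank\big(C(\mathcal{I},:)\big) = r-1$ is equivalent to $\dim \ker\big(C(\mathcal{I},:)\big) = 1$. For the forward implication (FRZRW $\Rightarrow$ \eqref{geoFRZRW}), if the kernel is one-dimensional then, since $e_{(k)} \in \ker\big(C(\mathcal{I},:)\big)$ and $e_{(k)} \neq 0$, we get $\ker\big(C(\mathcal{I},:)\big) = \operatorname{span}(e_{(k)})$; intersecting with the affine constraint $e^\top z = 1$ forces $z = e_{(k)}$, and thus $Z \subseteq \{e_{(k)}\}$, so $Z = \{e_{(k)}\}$.

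The reverse implication is where the work lies, and I would argue by contraposition. Suppose $\rank\big(C(\mathcal{I},:)\big) \leq r-2$, so $\dim \ker\big(C(\mathcal{I},:)\big) \geq 2$ and there is a $w \in \ker\big(C(\mathcal{I},:)\big)$ independent from $e_{(k)}$. The obstruction is that $w$ need not respect the normalization $e^\top z = 1$, so I would replace it by $w' = w - (e^\top w)\, e_{(k)}$, which still lies in the kernel, now satisfies $e^\top w' = 0$, and remains outside $\operatorname{span}(e_{(k)})$ (hence $w' \neq 0$). Then the perturbed vector $z_\epsilon = e_{(k)} + \epsilon w'$ satisfies $C(\mathcal{I},:) z_\epsilon = 0$ and $e^\top z_\epsilon = 1$ for every $\epsilon$; it remains only to preserve nonnegativity of $C z_\epsilon$. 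For rows $i \in \mathcal{I}$ we have $(Cz_\epsilon)_i = 0$, while for the finitely many rows $i \notin \mathcal{I}$ we have $C(i,k) > 0$, so $(Cz_\epsilon)_i = C(i,k) + \epsilon (Cw')_i > 0$ for all sufficiently small $\epsilon > 0$. Choosing such an $\epsilon \neq 0$ produces an element $z_\epsilon \in Z$ with $z_\epsilon \neq e_{(k)}$, contradicting $Z = \{e_{(k)}\}$.

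I expect the main obstacle to be precisely this perturbation step: one must exhibit a genuine nonzero feasible direction inside the kernel that simultaneously stays on the hyperplane $e^\top z = 1$ and keeps $Cz$ nonnegative. The correction $w \mapsto w - (e^\top w)\,e_{(k)}$ handles the affine constraint, the strict positivity $C(i,k) > 0$ for $i \notin \mathcal{I}$ together with finiteness of the index set supplies a uniform $\epsilon$, and checking that $w'$ stays linearly independent of $e_{(k)}$ (so that $z_\epsilon$ is genuinely new) is the small point that must not be overlooked.
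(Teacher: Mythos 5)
Your proof is correct and follows essentially the same route as the paper's: the forward direction forces $z = e_{(k)}$ via the rank/kernel argument, and the reverse direction argues by contraposition, perturbing $e_{(k)}$ along a kernel direction (your $w' = w - (e^\top w)\,e_{(k)}$ is, up to reparametrization, exactly the paper's $z'$ with $z'(\mathcal{K}) = \alpha y$, $z'(k) = 1-\alpha\beta$) and using $C(i,k) > 0$ for $i \notin \mathcal{I}$ to keep $Cz_\epsilon \geq 0$ for small $\epsilon$. The only cosmetic difference is that you work with $\ker\big(C(\mathcal{I},:)\big) \subseteq \mathbb{R}^r$ while the paper works with $\ker\big(C(\mathcal{I},\mathcal{K})\big) \subseteq \mathbb{R}^{r-1}$; these differ by the span of $e_{(k)}$, so the arguments coincide.
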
 
\begin{proof} Recall that $\mathcal{I}$ denotes the set of indices corresponding to the zero entries in $C(:,k)$, and let us denote $\xoverline[0.9]{\mathcal{I}}$ its complement which is the support of $C(:,k)$.  

$\boxed{\Rightarrow}$ Assume the FRZRW condition holds. Let $x = Cz \in \mathcal{F}_{C}\big( C(:,k) \big)$, that is, 
$Cz \geq 0$, 
$(Cz)_i = 0$ for $i \in \mathcal{I}$, 
$z^\top e = 1$. 
The condition $(Cz)_i = 0$ for $i \in \mathcal{I}$ can be written as  $C(\mathcal{I},:)z = 0$. Since $C(\mathcal{I},k) = 0$, by definition, this requires 
$C(\mathcal{I},\mathcal{K})z(\mathcal{K}) = 0$ where $\mathcal{K} = \{1,2,\dots,r\} \backslash \{k\}$ and the rank of $C(\mathcal{I},\mathcal{K})$ is $r-1$, by the FRZRW condition, and hence $z(\mathcal{K}) = 0$. This implies that $z = e_{(k)}$, since $z^\top e = 1$, and therefore~\eqref{geoFRZRW} holds. 
 
$\boxed{\Leftarrow}$ Assume~\eqref{geoFRZRW} holds. Since $C$ is nonsingular, \eqref{geoFRZRW} is equivalent to assuming that the solution to the system 
 \[
 Cz \geq 0, C(\mathcal{I},:) z = 0, e^\top z = 1, 
 \] 
 is unique and given by $z=e_{(k)}$. 
 The set of feasible solutions of the above system can be written as 
 \[
 \mathcal{Z} = \{ z \ | \ 
  C(\xoverline[0.9]{\mathcal{I}},:) z \geq 0, 
  C(\mathcal{I},\mathcal{K}) z(\mathcal{K}) = 0, 
  e^\top z = 1 \}. 
 \] 
Because of~\eqref{geoFRZRW} and $C$ being nonsingular, $\mathcal{Z} = \{e_{(k)}\}$.  
 Since $C(\xoverline[0.9]{\mathcal{I}},k) > 0$, by definition, $z = e_{(k)}$ belongs to the relative interior of $\mathcal{Z}$. 
Let us show that $\rank\big( C(\mathcal{I},\mathcal{K}) \big) < r-1$ implies that  the relative interior of $\mathcal{Z}$ is made of more than one point, leading to a contradiction, and hence  $\rank\big( C(\mathcal{I},\mathcal{K})\big) = r-1$ since $|\mathcal{K}| = r-1$.  
Let $y \neq 0$ belong to the kernel of $C(\mathcal{I},\mathcal{K})$, that is, $C(\mathcal{I},\mathcal{K}) y = 0$, 
 with $e^\top y = \beta \in \mathbb{R}$. 
 Let us define $z' \in \mathbb{R}^r$ as follows: $z'(\mathcal{K}) = \alpha y$ and $z'(k) = 1 - \alpha \beta$ so that $e^\top z' = 1$. For $\alpha$ sufficiently small, we have $C(\xoverline[0.9]{\mathcal{I}},:) z' > 0$, since 
 \[
 C(\xoverline[0.9]{\mathcal{I}},:) z' 
 = 
 \alpha C(\xoverline[0.9]{\mathcal{I}},\mathcal{K}) y 
 + 
  \underbrace{C(\xoverline[0.9]{\mathcal{I}},k)}_{> 0} (1 - \alpha \beta), 
 \] 
 and hence $z' \in \mathcal{Z}$ while $z' \neq e_{(k)}$. 
\end{proof}

Lemma~\ref{lemFRZRWvert} implies that, for $r=2$, the conditions of Theorem~\ref{mainth} are necessary and sufficient, since the  condition that $\mathcal{P}_{inn}$ and  $\mathcal{P}_{out}$ have a vertex that coincide is necessary and sufficient; see Section~\ref{sec:prev}.  


\subsection{New partial identifiability theorem for Exact NMF}  \label{sec:mainth2} 


By Theorem~\ref{th:nec}, for a column of $C$ to be identifiable, it has to belong to at least one facet of $\mathcal{C}$ where the other columns of $C$ are not located. In fact, its support cannot contain the support of any other  column of $C$. Geometrically, this means that, for $C(:,k)$ to be identifiable, a necessary condition is that $\mathcal{F}_{C}\big( C(:,k) \big)$ is a face of dimension smaller or equal than $r-2$ (recall that $\mathcal{C}$ has dimension $r-1$) where no other column of $C$ is located, that is, 
\[
C(:,j) \; \notin \;  \mathcal{F}_{C}\big( C(:,k) \big) \quad \text{ for all $j \neq k$}. 
\] 

Inspired by this observation, we 
obtain a new sufficient condition for partial identifiability in the following theorem. 
\begin{theorem} \label{mainthgeo0}
Let $R = C_\star S_\star^\top$ where 
$C_\star \in \mathbb{R}^{m \times r}_+$ and 
$S_\star \in \mathbb{R}^{r \times n}_+$ with $\rank(R) = r$. W.l.o.g., assume $R, C_\star$ and $S_\star^\top$ are column stochastic; see \eqref{sumtoonelemma} and \eqref{eq:sumtoone}.  
The $k$th column of $C_\star$ is identifiable if it satisfies the selective window condition, and 
there exists a subset, $\mathcal{J}$, of $r-1$ columns of $R$, namely $R(:,\mathcal{J})$, 
such that $\rank\left( R(:,\mathcal{J}) \right) = r-1$ and 
 for all $j \in \mathcal{J}$,  
\begin{equation} \label{eq:allijF}
\mathcal{F}_{C_\star}\big(C_\star(:,k)\big) \; \cap \;  
\mathcal{F}_{C_\star}\big(R(:,j)\big) \quad = \quad \emptyset , 
\end{equation}
that is, the minimal face on which the $k$th columns of $C_\star$ lies on does not intersect the minimal faces on which the columns of $R(:,\mathcal{J})$ lie on.   
\end{theorem}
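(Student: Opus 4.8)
The plan is to run the same cardinality-squeeze argument used in the proof of Theorem~\ref{mainth}, but to replace the rank statement on $C_\star(\mathcal{I},\mathcal{K})$ by the hypothesis $\rank(R(:,\mathcal{J}))=r-1$, using the face condition~\eqref{eq:allijF} to transfer information from the outer polytope to the factor $C$ of an arbitrary competing factorization. First I would fix an arbitrary Exact NMF $R=CS^\top$ of size $r$ and normalize it to be column stochastic. Since $\rank(R)=r$, we have $\col(C)=\col(C_\star)=\col(R)$, so the outer polytope $\mathcal{C}=\col(R)\cap\Delta$ and the minimal faces $\mathcal{F}_{C_\star}(\cdot)=\mathcal{F}_{C}(\cdot)$ are common to both factorizations; this identification is exactly what lets condition~\eqref{eq:allijF}, stated in terms of $C_\star$, constrain the competing $C$. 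In particular each $C(:,p)$ is a legitimate point of $\mathcal{C}$, as are $C_\star(:,k)$ and every $R(:,j)$.

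Next I would invoke the selective window condition to write $\alpha\,C_\star(:,k)=R(:,j_0)=\sum_{p=1}^r C(:,p)\,S(j_0,p)$ for the selective column $j_0$, a nonnegative combination with $S(j_0,:)^\top\in\Delta$. Setting $\mathcal{Q}=\{p : C(:,p)\in\mathcal{F}_{C_\star}(C_\star(:,k))\}$, nonnegativity forces every $p$ with $S(j_0,p)>0$ to lie in $\mathcal{Q}$, because a positive term cannot carry support outside $\supp(C_\star(:,k))$; and $\mathcal{Q}\neq\emptyset$ since $C_\star(:,k)\neq 0$.

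The key step is to show $S(j,p)=0$ for every $p\in\mathcal{Q}$ and every $j\in\mathcal{J}$. If instead $S(j,p)>0$, then decomposing $R(:,j)=\sum_q C(:,q)\,S(j,q)$ and using nonnegativity gives $\supp(C(:,p))\subseteq\supp(R(:,j))$, i.e. $C(:,p)\in\mathcal{F}_{C_\star}(R(:,j))$; combined with $p\in\mathcal{Q}$ this places the nonzero point $C(:,p)$ in $\mathcal{F}_{C_\star}(C_\star(:,k))\cap\mathcal{F}_{C_\star}(R(:,j))$, contradicting~\eqref{eq:allijF}. Hence $R(:,\mathcal{J})=C(:,\xoverline[0.9]{\mathcal{Q}})\,S(\mathcal{J},\xoverline[0.9]{\mathcal{Q}})^\top$, where $\xoverline[0.9]{\mathcal{Q}}$ is the complement of $\mathcal{Q}$, so $r-1=\rank(R(:,\mathcal{J}))\le|\xoverline[0.9]{\mathcal{Q}}|$ and thus $|\mathcal{Q}|\le 1$. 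Since $\mathcal{Q}\neq\emptyset$, we get $\mathcal{Q}=\{\ell\}$, and the selective-window identity collapses to $\alpha\,C_\star(:,k)=C(:,\ell)\,S(j_0,\ell)$, yielding $C(:,\ell)=\beta\,C_\star(:,k)$ with $\beta>0$, which is the claimed partial identifiability.

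I expect the main obstacle to be bookkeeping around the faces rather than any deep difficulty: one must verify carefully that $\mathcal{F}_{C_\star}(\cdot)$ and $\mathcal{F}_{C}(\cdot)$ genuinely coincide (so a face condition written with $C_\star$ is meaningful for $C$), that each $C(:,p)$ and each $R(:,j)$ really lie in $\mathcal{C}$ so the membership statements are well defined, and that the support characterization of $\mathcal{F}_{C_\star}$ is applied consistently in both the selective-window decomposition and the data-column decompositions. Once these are in place, the rank hypothesis $\rank(R(:,\mathcal{J}))=r-1$ plays precisely the role that $\rank(C_\star(\mathcal{I},\mathcal{K}))=r-1$ played in Theorem~\ref{mainth}, and the proof closes by the same squeeze $1\le|\mathcal{Q}|\le 1$.
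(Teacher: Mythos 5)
Your proof is correct and follows essentially the same route as the paper's: the paper phrases the argument in terms of rank-one factors ``touching'' $C_\star(:,k)S_\star(:,k)^\top$ versus $R(:,\mathcal{J})$ and invokes an exclusion principle, which is exactly your cardinality squeeze on $\mathcal{Q}$ using $\rank\left(R(:,\mathcal{J})\right)=r-1$ together with the face-disjointness condition~\eqref{eq:allijF}. If anything, your write-up makes explicit a step the paper leaves terse, namely deriving $\supp\big(C(:,p)\big)\subseteq\supp\big(C_\star(:,k)\big)$ from the selective-window column via nonnegativity rather than from ``touching'' alone.
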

\begin{proof}
Let $R = CS^\top$ be another exact NMF of $R$ of size $r$ where, w.l.o.g., we  assume $C$ and $S$ are column stochastic. 
Let $\mathcal{K} = \{1,2,\dots,r\} \backslash \{k\}$. 
We have 
\[
R = C_\star(:,k) S_\star(:,k)^\top +  C_\star(:,\mathcal{K}) S_\star(:,\mathcal{K})^\top
= \sum_{j=1}^r C(:,j) S(:,j)^\top. 
\] 
Let us introduce the following terminology: \revise{given two nonnegative matrices, $A$ and $B$, of the same dimension,  
we say that $A$ touches $B$ if there exists $(i,j)$ such that $A(i,j) > 0$ and $B(i,j) > 0$}.  
Below, we show that \eqref{eq:allijF} implies that, \revise{for  $j=1,2,\dots,r$, it is not possible that 
$C(:,j) S(:,j)^\top$ touches $C_\star(:,k) S_\star(:,k)^\top$ while $C(:,j) S(\mathcal{J},j)^\top$ touches   $R(:,\mathcal{J})$.} 
Since $R(:,\mathcal{J})$ has rank $r-1$, by the exclusion principle, exactly one rank-one factor touches $C_\star(:,k) S_\star(:,k)^\top$ and hence it has to coincide with it. 

Assume $C(:,p) S(:,p)^\top$ touches $C_\star(:,k) S_\star(:,k)^\top$ \revise{and 
$C(:,p) S(\mathcal{J},p)^\top$ touches} 
$R(:,\mathcal{J})$ for some $p$. 
As \mbox{$\rank\left(R(:,\mathcal{J})\right) = r-1$}, $R(:,j) \neq 0$ for all $j \in \mathcal{J}$.  
Since $C(:,p) S(:,p)^\top$ touches $C_\star(:,k) S_\star(:,k)^\top$, the support of $C(:,p)$ is contained in the support of $C_\star(:,k)$, and hence  
$C(:,p) \in \mathcal{F}_{C_\star}\big(C_\star(:,k)\big)$. 
By~\eqref{eq:allijF}, $C(:,p) \notin \mathcal{F}_{C_\star}\big(R(:,j)\big)$ for all $j \in \mathcal{J}$, that is, the support of $C(:,p)$ is not contained in the support of any column of $R(:,\mathcal{J})$ implying that it cannot touch \revise{any column of}  $R(:,\mathcal{J})$, a contradiction. 
\end{proof}

The condition in Theorem~\ref{mainthgeo0} implies that the $k$th column of $S_\star$ contains at least $r-1$ entries equal to zero, namely, $S_\star(\mathcal{J},k) = 0$,  since~\eqref{eq:allijF} requires that the support of $R(:,j)$ for $j\in \mathcal{J}$ does not contain the support of $C_\star(:,k)$. 

\begin{example} \label{examplesquare}
Let us consider the NPP where $\mathcal{P}_{out}$ is the unit square  $[0,1]^2$ as in Example~\ref{exNPP}, 
 while $\mathcal{P}_{inn}$ is the triangle with the vertices 
 $v_1=(0.5,0)$, $v_2=(0.2,1)$ and $v_3=(0.8,1)$; 
 see Figure~\ref{exNPPcorrec} for an illustration. 
 \begin{figure}[ht!]
\begin{center}
\includegraphics[width=0.8\textwidth]{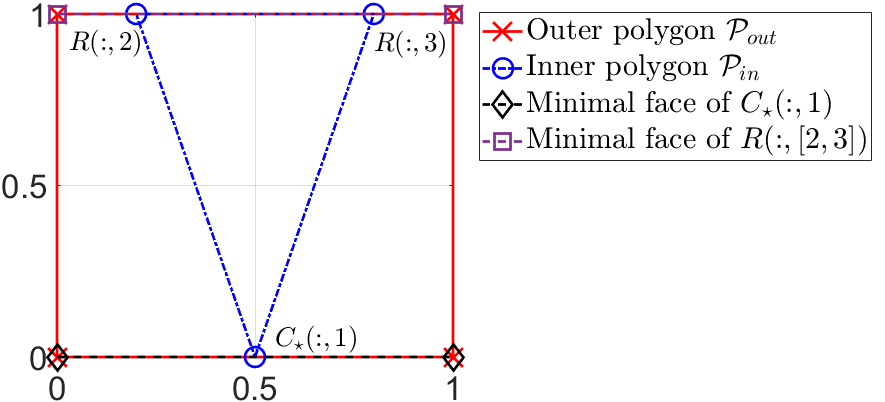} 
\caption{Illustration of the NPP instance described in Example~\ref{examplesquare}.\label{exNPPcorrec}} 
\end{center}
\end{figure}
 The matrix $R$ of the corresponding Exact NMF problem is given by $R(:,j) = F v_j + g$ for all $j$, that is, 
 \[
 R =  \left( \begin{array}{ccc}
      0  &   1 &   1  \\
      1  &   0 &   0  \\
      0.5  &   0.2 &   0.8  \\
      0.5  &   0.8 &   0.2 
 \end{array} \right) 
 = 
  \left( \begin{array}{ccc}
      0  &   1 &   1  \\
      1  &   0 &   0  \\
      0.5  &   0 &   1  \\
      0.5  &   1 &   0 
 \end{array} \right) 
 \left( \begin{array}{cccc}
      1  &   0 &   0  \\ 
      0  &   0.8 &   0.2  \\
      0  &   0.2 &   0.8  \\
 \end{array} \right). 
 \] 
 The first column of $C$ satisfies the selective window assumption, while  we observe on Figure~\ref{exNPPcorrec} that Theorem~\ref{mainthgeo0} applies using $R(:,[2,3])$ whose minimal faces do not intersect with that of $C_\star(:,1)$ 
 which is therefore identifiable. 
 Note that the restricted DBU Theorem~\ref{mainth} is not applicable to $C_\star(:,1)$ since it does not correspond to a vertex of $\mathcal{P}_{out}$. 
\end{example}

It is important to note that \begin{itemize}
    \item Theorem~\ref{mainthgeo0} does not subsume Theorem~\ref{mainth} which applies to a column of $C$ which is a vertex of $\mathcal{P}_{out}$ in which case the existence of a subset of columns of $R$ satisfying~\eqref{eq:allijF} is not necessary. 
For example, taking $\mathcal{P}_{out}$ as the square in two dimensions, as above, and taking the vertices of  $\mathcal{P}_{inn}$ as $v_1 = (0,0)$ (bottom left corner), $v_2 = (0,0.5)$ and $v_3 = (0.5,0)$, the conditions of Theorem~\ref{mainthgeo0} do not apply to the first column of $C$ (corresponding to $v_1$, since the minimal faces of $v_2$ and of $v_3$ contain $v_1$) while Theorem~\ref{mainth} does apply.  

\item For condition~\eqref{eq:allijF} to be satisfied, a necessary, but not sufficient, condition  is that the supports of $C_\star(:,k)$ and $R(:,j)$ are not contained in one another. In fact, this support condition implies that  
$\mathcal{F}_{C_\star}\big(C_\star(:,k)\big)$ and 
$\mathcal{F}_{C_\star}\big(R(:,j)\big)$ are distinct faces, but not that their intersection is empty.  
\end{itemize}

  Theorem~\ref{mainthgeo0} can be directly used to obtain a full identifiability result. 
\begin{corollary} \label{corr:mainthgeo0}
Let $R = C_\star S_\star^\top$ where 
$C_\star \in \mathbb{R}^{m \times r}_+$ and 
$S_\star \in \mathbb{R}^{r \times n}_+$ with $\rank(R) = r$. 
W.l.o.g., assume $R, C_\star$ and $S_\star^\top$ are column stochastic. If 
\begin{enumerate}
    \item Every column of $C_\star$ satisfies the selective window assumption, that is, $S_\star^\top$ is separable, and 
    
    \item The following holds for all $k \neq j$ 
\begin{equation*} 
\mathcal{F}_{C_\star}\big(C_\star(:,k)\big) \; \cap \;  
\mathcal{F}_{C_\star}\big(C_\star(:,j)\big) \quad = \quad \emptyset , 
\end{equation*}
\end{enumerate} 
then $(C_\star, S_\star)$ is (fully) identifiable. 
\end{corollary}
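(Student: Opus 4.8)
The plan is to apply Theorem~\ref{mainthgeo0} separately to each of the $r$ columns of $C_\star$, and then assemble the resulting column-by-column matchings into a single global permutation and scaling, recovering $S_\star$ at the end from the full column rank of $C_\star$.

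First I would fix an arbitrary index $k$ and exhibit the subset $\mathcal{J}$ required by Theorem~\ref{mainthgeo0}. Assumption~1 says that $S_\star^\top$ is separable, so for each $p \in \{1,\dots,r\}$ there is a row $j_p$ of $S_\star$ with $S_\star(j_p,:) = \alpha_p e_{(p)}^\top$ for some $\alpha_p > 0$, whence $R(:,j_p) = C_\star S_\star(j_p,:)^\top = \alpha_p C_\star(:,p)$. I would then take $\mathcal{J} = \{ j_p : p \neq k \}$, a set of $r-1$ distinct columns of $R$ (distinct because the rows $S_\star(j_p,:)$ point in different coordinate directions). Since $\rank(C_\star) = r$, any $r-1$ of its columns are linearly independent, so $\rank\big(R(:,\mathcal{J})\big) = r-1$, as needed. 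Moreover, by the defining formula~\eqref{facety}, $\mathcal{F}_{C_\star}(\cdot)$ depends on its argument only through its support, and $\supp\big(R(:,j_p)\big) = \supp\big(C_\star(:,p)\big)$, so $\mathcal{F}_{C_\star}\big(R(:,j_p)\big) = \mathcal{F}_{C_\star}\big(C_\star(:,p)\big)$. The disjointness requirement~\eqref{eq:allijF} over all $j \in \mathcal{J}$ therefore collapses to exactly assumption~2, namely $\mathcal{F}_{C_\star}\big(C_\star(:,k)\big) \cap \mathcal{F}_{C_\star}\big(C_\star(:,p)\big) = \emptyset$ for all $p \neq k$. Together with the selective window condition (assumption~1 again), Theorem~\ref{mainthgeo0} applies, so $C_\star(:,k)$ is identifiable; this holds for every $k$.

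Second, I would upgrade these $r$ individual statements to a single permutation. Let $R = CS^\top$ be any other Exact NMF of size $r$. By Definition~\ref{def:partialUniqNMF}, for each $k$ there are an index $\sigma(k)$ and a scalar $\beta_k > 0$ with $C(:,\sigma(k)) = \beta_k C_\star(:,k)$. The map $\sigma$ is injective: if $\sigma(k) = \sigma(k')$ with $k \neq k'$, then $C_\star(:,k)$ and $C_\star(:,k')$ would be parallel, contradicting $\rank(C_\star) = r$. An injection of $\{1,\dots,r\}$ into itself is a permutation, so there exist a permutation matrix $\Pi$ and a positive diagonal matrix $D$ with $C = C_\star \Pi D$.

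Finally, I would recover $S$. From $C_\star S_\star^\top = C S^\top = C_\star \Pi D S^\top$ and the fact that $C_\star$ has full column rank $r$, hence a left inverse, I cancel $C_\star$ to obtain $S_\star^\top = \Pi D S^\top$, i.e.\ $S^\top = D^{-1}\Pi^\top S_\star^\top$. This is precisely the form required by Definition~\ref{def:uniqNMF}, so $(C_\star, S_\star)$ is fully identifiable. The only genuinely non-routine step is the first one: recognizing that separability of $S_\star^\top$ supplies, at no extra cost, a rank-$(r-1)$ subset $\mathcal{J}$ of columns of $R$ that are scaled copies of the remaining columns of $C_\star$, and that the support-dependence (scale invariance) of $\mathcal{F}_{C_\star}$ turns condition~\eqref{eq:allijF} into assumption~2. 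The passage to a global permutation and the recovery of $S$ are then standard consequences of $\rank(C_\star) = r$.
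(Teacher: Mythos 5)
Your proof is correct and takes essentially the same route as the paper: apply Theorem~\ref{mainthgeo0} to each column $C_\star(:,k)$, using separability of $S_\star^\top$ to supply the set $\mathcal{J}$ of $r-1$ columns of $R$ that are (scaled copies of) the remaining columns of $C_\star$, so that condition~\eqref{eq:allijF} collapses to assumption~2, and then recover $S_\star$ from the full column rank of $C_\star$. The paper's proof is simply terser, leaving implicit the rank verification, the scale-invariance of $\mathcal{F}_{C_\star}$, and the assembly of the column-wise identifications into a global permutation, all of which you spell out correctly.
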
 
\begin{proof}
 On one hand, Theorem~\ref{mainthgeo0} applies to all columns of $C_\star$, taking $R(:,\mathcal{J}) = C\left(:,\{1\dots,r\} \backslash \{k\}\right)$ 
  for all $k = 1,2,\dots,r$,  since $S_\star^\top$ is separable.  
  On the other hand $S_\star$ is identifiable since $C_\star$ is and $\rank(C_\star) = r$. 
\end{proof}
 For example, in two dimensions, when $r=3$,
full identifiability based on Corollary~\ref{corr:mainthgeo0} requires that, in the  NPP, the three vertices of $\mathcal{P}_{inn}$ corresponding to the three columns of $C_\star$ are located on three non-adjacent edges of the polygon $\mathcal{P}_{out}$. 
Note that this requires $\mathcal{P}_{out}$ to have at least six edges, that is, to be an $n$-gon with $n \geq 6$. This implies that $R$ needs to have at least 6 rows.

\begin{example} \label{ex:cube}
Let us take an example with  $r=4$, for which the NPP has dimension three.  
Consider the outer polytope as the unit cube in dimension 3, with $\mathcal{P}_{out} = [0,1]^3$, and take the vertices of $\mathcal{P}_{inn}$ as 
(0,0,0.75), (1, 0, 0.25), (0, 1, 0.25) and (1, 1, 0.75). 
This construction satisfies the conditions of Theorem~\ref{mainthgeo0} for all $k$  since the vertices of $\mathcal{P}_{inn}$ are on (minimal) faces (namely, edges) that do not intersect; see Figure~\ref{fig:cube} for an illustration. 
    \begin{figure}[ht!]
\begin{center}
\includegraphics[width=0.6\textwidth]{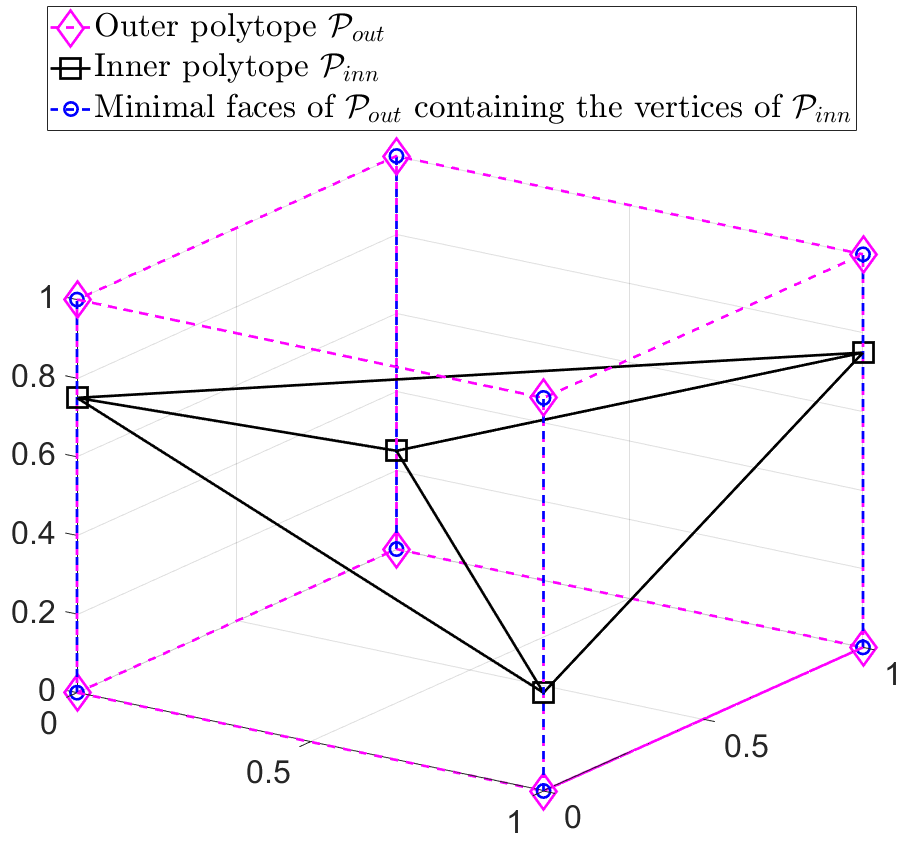} 
\caption{Geometric interpretation of Example~\ref{ex:cube} that satisfies the conditions of
Theorem~\ref{mainthgeo0}. \label{fig:cube}} 
\end{center}
\end{figure}

The corresponding $R$ is given by 
\[
 \begin{array}{c|cccc}
              &   (0,0,0.75) & (1, 0, 0.25) &  (0, 1, 0.25) &  (1, 1, 0.75) \\ 
 \hline  
x_1 \geq 0    &      0       &     1         &   0          &  1   \\
x_2 \geq 0    &      0       &     0         &   1          &  1 \\
x_3 \geq 0    &     0.75     &     0.25      & 0.25         &  0.75 \\
x_1 \leq 1    &     1        &     0         & 1            & 0 \\
x_2 \leq 1    &     1        &     1         & 0            & 0 \\
x_3 \leq 1    &     0.25     &    0.75       & 0.75         & 0.25
 \end{array}
\]
and therefore has a unique exact NMF, $R=RI$. Note that no column of $R$ satisfies the FRZRW condition of Theorem~\ref{mainth}, since $R$ has only two zero entries per column: Geometrically, no vertices of $\mathcal{P}_{inn}$ is a vertex of $\mathcal{P}_{out}$. 
\end{example}

\subsubsection{Is it easy to check the conditions of Theorem~\ref{mainthgeo0}?} 

Let $R = CS^\top$  be  an Exact NMF of size $r= \rank(R)$ where 
$R$ and $C$ are column stochastic (w.l.o.g.). 
A column of $R$, say the $j$th, fails to satisfy condition~\eqref{eq:allijF} 
if and only if 
there exists $x$ such that
\[
x \quad \in \quad  \mathcal{F}_{C_\star}\big(C_\star(:,k)\big) \; \cap \;  
\mathcal{F}_{C_\star}\big(R(:,j)\big). 
\] 
Such a $x$ exists if the following linear system in variable $z \in \mathbb{R}^r$ has a solution 
\[
x = Cz \geq 0, \; 
z^\top e = 1, \; 
(Cz)_i = 0 \text{ for all } 
i \in \mathcal{K}_{k,j} = \{ p \ | \ \revise{C(p,k)} = 0 \text{ or } \revise{R(p,j)} = 0 \}. 
\] 
This is a linear system in $r$ variables, with $\mathcal{O}(m)$ equalities and inequalities. 
In our implementation (see Section~\ref{sec:algo}), to avoid numerical issue, we rather solve the following linear optimization problem (which is always feasible): 
\begin{equation} \label{eq:linprog}
 \min_{z} \sum_{i \in \mathcal{K}_{k,j}} (Cz)_i 
 \quad \text{ such that }   \quad 
 Cz \geq 0 \text{ and } e^\top z = 1,  
\end{equation}
and check whether the optimal objective function value is below a given threshold (we used $10^{-6}$).

\subsection{Using partial identifiability theorems sequentially} \label{sec:recurDBU}

In this section, we provide a simple general framework to generalize partial identifiability theorems, assuming a subset of columns of $C_\star$ is already identifiable. 


\begin{theorem} \label{mainth2}
Let $R = C_\star S_\star^\top$ where 
$C_\star \in \mathbb{R}^{m \times r}_+$ and 
$S_\star \in \mathbb{R}^{r \times n}_+$ with $\rank(R) = r$. 
Assume $p$ columns of $C_\star$ are identifiable for \mbox{$p \in \{1,2,\dots,r-1\}$}, 
say the first $p$ w.l.o.g., that is, $C_\star(:,j)$ are identifiable for $j = 1,2,\dots,p$ (Definition~\ref{def:partialUniqNMF}).  
Let $\mathcal{J}$ be the index set corresponding to the columns of $R$ that do not contain the support of the first $p$ columns of $C_\star$. 

If \revise{$\rank\big( S_\star( \mathcal{J} ,  p+1:r ) \big) = r-p$}, 
and if the $(p+1)$th column of $C_\star$ can be certified to be identifiable in the Exact NMF $R(:, \mathcal{J})  
= C_\star(:,p+1:r) S_\star(\mathcal{J}, p+1:r)^\top$ of size $r-p$, then 
$C_\star(:,p+1)$ is identifiable in the Exact NMF of $R$ of size $r$. 
\end{theorem}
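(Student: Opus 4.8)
The plan is to reduce, for an arbitrary competing Exact NMF $R = CS^\top$ of size $r$, the uniqueness question to the smaller Exact NMF obtained by selecting the columns indexed by $\mathcal{J}$, where the already-identified components drop out, and then to invoke the assumed identifiability of $C_\star(:,p+1)$ in that reduced problem. Fixing such a competing factorization, I would first use the identifiability of the first $p$ columns of $C_\star$ (Definition~\ref{def:partialUniqNMF}): each $C_\star(:,j)$ with $j \le p$ must equal some column of $C$ up to positive scaling. Because $\rank(C_\star) = r$, these columns are pairwise non-proportional, so they correspond to $p$ distinct columns of $C$; after permuting and rescaling the columns of $C$ (and $S$ accordingly), I may assume $C(:,j) = \alpha_j C_\star(:,j)$ with $\alpha_j > 0$ for $j = 1, \dots, p$.

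The crucial step is to show that restricting to the columns indexed by $\mathcal{J}$ removes all contributions of the first $p$ components on both sides. By the definition of $\mathcal{J}$, for every $j \in \mathcal{J}$ the support of $R(:,j)$ does not contain $\supp(C_\star(:,i))$ for any $i \le p$. Since every term in $R(:,j) = \sum_k C(:,k) S(j,k)$ is nonnegative, a positive weight $S(j,i) > 0$ with $i \le p$ would force $\supp(C_\star(:,i)) = \supp(C(:,i)) \subseteq \supp(R(:,j))$, contradicting $j \in \mathcal{J}$; hence $S(\mathcal{J}, i) = 0$ for all $i \le p$. Applying the identical support argument to $R = C_\star S_\star^\top$ gives $S_\star(\mathcal{J}, i) = 0$ for $i \le p$, so that
\[
R(:,\mathcal{J}) = C_\star(:,p+1:r)\, S_\star(\mathcal{J}, p+1:r)^\top = C(:,p+1:r)\, S(\mathcal{J}, p+1:r)^\top.
\]
Both are Exact NMFs of $R(:,\mathcal{J})$ of size $r-p$, and the rank hypothesis $\rank\big(S_\star(\mathcal{J}, p+1:r)\big) = r-p$, combined with $\rank\big(C_\star(:,p+1:r)\big) = r-p$, ensures $\rank\big(R(:,\mathcal{J})\big) = r-p$; thus these are factorizations of size equal to the rank, which is exactly the regime in which the assumed partial identifiability statement is phrased.

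Finally, I would apply the assumed identifiability of $C_\star(:,p+1)$ in the reduced Exact NMF $R(:,\mathcal{J}) = C_\star(:,p+1:r) S_\star(\mathcal{J}, p+1:r)^\top$ to the competing reduced factorization $C(:,p+1:r) S(\mathcal{J}, p+1:r)^\top$: by Definition~\ref{def:partialUniqNMF}, some column of $C(:,p+1:r)$, and hence of the original $C$, equals $C_\star(:,p+1)$ up to positive scaling. Since the size-$r$ Exact NMF $R = CS^\top$ was arbitrary, this establishes that $C_\star(:,p+1)$ is identifiable in the Exact NMF of $R$ of size $r$.

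The main obstacle is the \emph{decoupling step}: one must carefully combine the identifiability of the first $p$ columns, the nonnegativity/support argument, and the definition of $\mathcal{J}$ to guarantee that passing to the columns $\mathcal{J}$ produces a genuine Exact NMF of size $r-p$ whose rank is also $r-p$. This matching of size and rank is precisely what allows the reduced identifiability hypothesis to be applied verbatim, and it is where the rank condition on $S_\star(\mathcal{J}, p+1:r)$ is used in an essential way.
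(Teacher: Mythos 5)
Your proof is correct and follows essentially the same route as the paper's: reduce the competing factorization $R = CS^\top$ to the columns $\mathcal{J}$, use nonnegativity and the support condition to show $S(\mathcal{J},i) = 0$ for $i \le p$, verify the rank of $R(:,\mathcal{J})$ equals $r-p$, and invoke the assumed identifiability in the reduced Exact NMF. Your write-up is in fact slightly more careful than the paper's, since you also justify that the $p$ identified columns match distinct columns of $C$ and that $S_\star(\mathcal{J}, i) = 0$ for $i \le p$ (which the paper leaves implicit in the theorem statement).
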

\begin{proof} 
Let $R = CS^\top$ be an Exact NMF of $X$ of size $r$ 
with  
$C \in \mathbb{R}^{m \times r}_+$ and 
$S \in \mathbb{R}^{n \times r}_+$. 
W.l.o.g., $C(:,1:p) = C_\star(:,1:p) D$ where $D$ is a diagonal matrix since the first $p$ columns of $C_\star$ are identifiable.  
We have 
\begin{equation} \label{decompXj2}
R(:, \mathcal{J})  
= 
C_\star S_\star(\mathcal{J},:)^\top  
= \sum_{q=p+1}^r C(:,q) S(\mathcal{J},q)^\top. 
\end{equation} 
The last equality follows by construction: the columns of $R(:,\mathcal{J})$ do not contain the support of the columns of 
$C_\star(:,1:p)$, which coincide with that of
$C(:,1:p)$, implying $S(\mathcal{J},q) = 0$ for all $q \leq p$. 
The fact that $\rank\big( S_\star( p+1:r , \mathcal{J}) \big) = r-p$ implies that $\rank\big(R(:,\mathcal{J})\big) = r-p$ since $\rank\big(C(:p+1:r)\big) = r-p$ as $\rank(C) = r$, and hence~\eqref{decompXj2} is an Exact NMF of rank $r-p$. By assumption, $C_\star(:,p+1)$ is identifiable in the Exact NMF~\eqref{decompXj2} so that one of the columns of $C(:,p+1:r)$ is equal to $C_\star(:,p+1)$, up to scaling. 
\end{proof} 


Let us illustrate Theorem~\ref{mainth2} on a simple example where all columns of $C_\star$ can be certified to be identifiable, using Theorem~\ref{mainth} sequentially.  
\begin{example} \label{ex:rank4th7} 
Let 
\[
R = \underbrace{\left( 
\begin{array}{cccc}
      0 & 1 & 1 & 1 \\
      0 & 1 & 2 & 3 \\ 
      0 & 1 & 2 & 1 \\ 
      1 & 0 & 1 & 2 \\ 
      1 & 0 & 2 & 1 \\
      1 & 1 & 0 & 1 \\
      1 & 1 & 1 & 0 \\
\end{array}
\right)}_{C_\star}
\underbrace{\left( 
\begin{array}{cccc}
     1 & 0 & 0 & 0  \\
     0 & 1 & 0 & 0 \\ 
     0 & 0 & 1 & 0 \\
      0 & 0 & 0 & 1
\end{array}
\right)}_{S_\star^\top} . 
\]
All columns of $C_\star$ are identifiable. The first one is by Theorem~\ref{mainth}. 
The second one is by combining Theorem~\ref{mainth2} and Theorem~\ref{mainth}: 
the last three columns of $R$ do not belong to the support of $C_\star(:,1)$, we have 
\[
R(:,2:4) = \underbrace{\left( 
\begin{array}{cccc}
       1 & 1 & 1 \\
       1 & 2 & 3 \\ 
       1 & 2 & 1 \\ 
       0 & 1 & 2 \\ 
       0 & 2 & 1 \\
       1 & 0 & 1 \\
       1 & 1 & 0 \\
\end{array}
\right)}_{C_\star(:,2:4)}
\underbrace{\left( 
\begin{array}{cccc}
       1 & 0 & 0 \\ 
       0 & 1 & 0 \\
        0 & 0 & 1
\end{array}
\right)}_{S_\star(2:4,2:4)^\top}, 
\]
where $\rank\big(S_\star(2:4,2:4)\big) = 3$. 
We can therefore apply Theorem~\ref{mainth} to the above Exact NMF of size $r-p=3$, which certifies the identifiability of $C_\star(:,2)$ (the selective window and FRZRW conditions hold). One can certify the identifiability of the last two columns of $C_\star$ in the same way. 
\end{example}

It is important to note that the conditions of Theorem~\ref{mainth2} do not necessarily become milder as $p$ increases. 
In practice, this means one needs to check $\sum_{p'=0}^p \binom{r}{p'}$ cases for each column of $C_\star$ not identified yet. 
However, this can be implemented relatively easily using recursion; 
see Section~\ref{sec:algo} for the details.  
Let us illustrate this on another example. 
\begin{example} \label{ex:rank4th7v2}
Let 
\[
R = \underbrace{\left( 
\begin{array}{cccc}
      0 & 1 & 1 & 1 \\
      0 & 1 & 2 & 3 \\ 
      0 & 1 & 2 & 1 \\ 
      1 & 0 & 1 & 2 \\ 
      1 & 0 & 2 & 1 \\
      1 & 0 & 0 & 1 \\
      1 & 1 & 1 & 0 \\
\end{array}
\right)}_{C_\star}
\underbrace{\left( 
\begin{array}{cccc}
     1 & 0 & 1 & 1  \\
     0 & 1 & 0 & 0 \\ 
     0 & 0 & 1 & 0 \\
     0 & 0 & 0 & 1
\end{array}
\right)}_{S_\star^\top} 
= 
\left( 
\begin{array}{cccc}
      0 & 1 & 1 & 1 \\
      0 & 1 & 2 & 3 \\ 
      0 & 1 & 2 & 1 \\ 
      1 & 0 & 2 & 3 \\ 
      1 & 0 & 3 & 2 \\
      1 & 0 & 1 & 2 \\
      1 & 1 & 2 & 1 \\
\end{array}
\right). 
\] 
As in Example~\ref{ex:rank4th7}, the first column of $C_\star$ is identifiable. Now, we realize that Theorem~\ref{mainth2} with $p=1$ for the second column is not applicable: the last two columns of $R$ do contain the support of $C_\star(:,1)$, so that $\mathcal{J} = \{2\}$, and 
$\rank\big( S(p+1:r, \mathcal{J} \big) = 1  < r-p-1 = 2$. 
However, the second column of $C_\star$ satisfies the conditions of 
Theorem~\ref{mainth}, and hence is identifiable. 

Note that the last two columns of $C_\star$ do not satisfy the selective window assumption, and it turns out that they are not identifiable; 
since another Exact NMF is given by $R = R I$.  
\end{example}

\subsection{Partial identifiability for Exact NMF when $r=3$} \label{sec:partialr3} 
We now analyze the case when $r=3$, which is of particular interest in the MCR literature, by providing a new condition for identifiability of two columns of $C_\star$. Before that, let us show the following lemma. 

\begin{lemma} \label{lem2C}
Let $R = C_\star S_\star^\top$ be an exact NMF of $R$ of size $r = \rank(R)$ where the $k$th column of $C_\star$ satisfies the selective window assumption.  
Let $R = CS^\top$ be an exact NMF of $R$ of size $r$. W.l.o.g., assume $C_\star$ and $C$ are column stochastic. 
If the $k$th column of $C_\star$ is not identified in $C$, that is, $C(:,j) \neq C_\star(:,k)$ for all $j$, then 
   there exists an index set $\mathcal{J}$ with $|\mathcal{J}| \geq 2$ such that 
\[
C(:,j) \in \mathcal{F}_{C_\star}\big( C_\star(:,k) \big) 
\; 
\text{ for } 
\; 
j \in \mathcal{J}. 
\]
\end{lemma}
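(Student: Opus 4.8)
The plan is to exploit the selective window assumption to write $C_\star(:,k)$ as a genuine convex combination of columns of the competing factor $C$, and then to argue that every column occurring with positive weight in that combination must lie on the minimal face $\mathcal{F}_{C_\star}\big(C_\star(:,k)\big)$. First I would invoke the selective window condition: there is a row index $j_0$ with $S_\star(j_0,:) = \alpha e_{(k)}^\top$ for some $\alpha > 0$, so that, exactly as in~\eqref{decompXj},
\[
\alpha\, C_\star(:,k) \;=\; R(:,j_0) \;=\; \sum_{p=1}^r C(:,p)\, S(j_0,p).
\]
Applying $e^\top$ to both sides and using that $C_\star$ and $C$ are column stochastic gives $\sum_{p} S(j_0,p) = \alpha$, so after dividing by $\alpha$ the vector $C_\star(:,k)$ is a convex combination of the columns $C(:,p)$. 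I set $\mathcal{J} = \{\, p : S(j_0,p) > 0 \,\}$ to be the support of this combination.

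Next I would show that $C(:,p) \in \mathcal{F}_{C_\star}\big(C_\star(:,k)\big)$ for every $p \in \mathcal{J}$. Let $\mathcal{I} = \{\, i : C_\star(i,k) = 0 \,\}$. For $i \in \mathcal{I}$ the identity above yields $0 = \sum_{p \in \mathcal{J}} C(i,p)\, S(j_0,p)$; since every summand is nonnegative and $S(j_0,p) > 0$ on $\mathcal{J}$, each $C(i,p) = 0$. Hence $\supp\big(C(:,p)\big) \subseteq \supp\big(C_\star(:,k)\big)$. Because $\rank(R) = r$ forces $\col(C) = \col(C_\star)$, and because $C(:,p)$ is column stochastic, we have $C(:,p) \in \col(C_\star) \cap \Delta$, so the support containment together with the characterization~\eqref{facety} places $C(:,p)$ in $\mathcal{F}_{C_\star}\big(C_\star(:,k)\big)$.

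Finally I would bound $|\mathcal{J}| \geq 2$. The set $\mathcal{J}$ is nonempty since $C_\star(:,k) \neq 0$ (as $\rank(C_\star) = r$). If $|\mathcal{J}| = 1$, say $\mathcal{J} = \{p_0\}$, then the normalized weight equals one, giving $C_\star(:,k) = C(:,p_0)$, which contradicts the hypothesis that the $k$th column of $C_\star$ is not identified in $C$. Therefore $|\mathcal{J}| \geq 2$, and $\mathcal{J}$ is the required index set. The steps are all routine once the convex-combination identity is established; the only point requiring care is the bookkeeping with $\alpha$ that forces the weights to sum to $\alpha$ (via column stochasticity of $C$ and $C_\star$), so that the single-column case collapses to exact equality and produces the contradiction. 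I do not foresee any genuine obstacle, so this argument should go through cleanly.
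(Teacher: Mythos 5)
Your proof is correct and takes essentially the same route as the paper: both write $C_\star(:,k)$ as a convex combination $Cz$ with $z \in \Delta$ via the selective window row, deduce that every column of $C$ with positive weight has support contained in $\supp\big(C_\star(:,k)\big)$ and hence lies in $\mathcal{F}_{C_\star}\big(C_\star(:,k)\big)$ (using $\col(C)=\col(C_\star)$), and rule out $|\mathcal{J}|=1$ by the non-identification hypothesis. The only difference is that you make explicit the normalization bookkeeping (applying $e^\top$ to show the weights sum to $\alpha$), which the paper leaves implicit.
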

\begin{proof}
Since $C_\star(:,k)$ satisfies the selective window assumption, that is, $C_\star(:,k) = \alpha R(:,j)$ for some~$j$ and $\alpha > 0$, we have $C_\star(:,k) = Cz$ for some $z \in \Delta$. 
The result then follows from the two observations: 
\begin{itemize}
    \item Since $C_\star(:,k) = Cz$, 
    $\supp\big(C(:,j)\big) \subseteq \supp\big( C_\star(:,k)\big)$ for all $j$ such that for $z_j > 0$. Therefore  
    $C(:,j) \in \mathcal{F}_{C_\star}\big( C_\star(:,k) \big)$ since $\col(C) = \col(C_\star) = \col(R)$.  
    
    \item Let $\mathcal{J} = \{ j \ | \ z_j > 0 \}$. 
    If $|\mathcal{J}| = 1$,  
    $C(:j) = C_\star(:,k)$ for some $j$, a contradiction, hence $|\mathcal{J}| \geq 2$.  
\end{itemize}
\end{proof}

\begin{theorem} \label{corthgeo}
Let $R = C_\star S_\star^\top$ where 
$C_\star \in \mathbb{R}^{m \times 3}_+$ and 
$S_\star \in \mathbb{R}^{3 \times n}_+$ with $\rank(R) = 3$, and $R$, $C_\star$ and $S_\star^\top$ normalized to be column stochastic as in~\eqref{eq:sumtoone}.   
Let us assume that two columns of $C_\star$  satisfy the selective window assumption, say the first and second one w.l.o.g. 
Let also the supports of $C_\star(:,1)$ and $C_\star(:,2)$ not be contained in one another. 
Then, these two columns are identifiable if 
 there exists a column of $R$, say the $j$th, such that: 
 
if $\mathcal{F}_{C_\star}\big(C_\star(:,2)\big) \cap 
\mathcal{F}_{C_\star}\big(C_\star(:,1)\big) = \emptyset$, 
\begin{equation} \label{lem3eq1}
R(:,j) \quad \notin \quad \conv\Big(\big[ C_\star(:,1), \mathcal{F}_{C_\star}\big(C_\star(:,2)\big) \big]\Big) \cup  \conv\Big(\big[ C_\star(:,2), \mathcal{F}_{C_\star}\big(C_\star(:,1)\big) \big]\Big), 
\end{equation} 

else 
\begin{equation} \label{lem3eq2} 
R(:,j) \quad \notin \quad \conv\Big( \mathcal{F}_{C_\star}\big(C_\star(:,1)\big), \mathcal{F}_{C_\star}\big(C_\star(:,2)\big) \Big). 
\end{equation} 

\end{theorem}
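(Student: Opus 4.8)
The plan is to read the statement through the two-dimensional NPP attached to the Exact NMF and to combine the nestedness of the polytopes with the counting supplied by Lemma~\ref{lem2C}. First I would normalize so that $R$, $C_\star$ and $S_\star^\top$ are column stochastic, fix an arbitrary competing Exact NMF $R = CS^\top$ of size $3$ (also column stochastic, w.l.o.g.), and set $T = \conv(C)$. Because $S^\top$ is column stochastic, each column of $R$ is a convex combination of columns of $C$, so $\conv(R) \subseteq T \subseteq \mathcal{C}$, where $\mathcal{C} = \col(C_\star) \cap \Delta$ is the two-dimensional outer polygon. Writing $\mathcal{F}_1 = \mathcal{F}_{C_\star}(C_\star(:,1))$ and $\mathcal{F}_2 = \mathcal{F}_{C_\star}(C_\star(:,2))$, the target statement is that at least two of the three vertices (columns) of $C$ coincide with $C_\star(:,1)$ and $C_\star(:,2)$; since everything is normalized, ``identified'' means exactly equal.

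Then I would argue by contradiction, assuming that at least one of the two columns, say $C_\star(:,1)$, is not identified in $C$. Applying Lemma~\ref{lem2C} to $k=1$ yields that at least two of the three vertices of $T$ lie on the face $\mathcal{F}_1$. I would split on whether $C_\star(:,2)$ is also unidentified. If it is, Lemma~\ref{lem2C} applied to $k=2$ forces at least two vertices of $T$ onto $\mathcal{F}_2$ as well. In the disjoint case~\eqref{lem3eq1}, where $\mathcal{F}_1 \cap \mathcal{F}_2 = \emptyset$, no vertex can lie on both faces, so $T$ would need at least four vertices, contradicting that it is a triangle; hence both columns cannot fail at once. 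In the intersecting case~\eqref{lem3eq2}, the two requirements can overlap only at $\mathcal{F}_1 \cap \mathcal{F}_2$, so all three vertices of $T$ lie in $\mathcal{F}_1 \cup \mathcal{F}_2$ and therefore $T \subseteq \conv(\mathcal{F}_1 \cup \mathcal{F}_2)$.

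The remaining branch is that exactly one column is unidentified while the other is; say $C_\star(:,1)$ fails and $C_\star(:,2)$ is a vertex of $T$ lying in $\mathcal{F}_2$. Using the hypothesis that the supports of $C_\star(:,1)$ and $C_\star(:,2)$ are not contained in one another, I get $C_\star(:,2) \notin \mathcal{F}_1$, so the vertex $C_\star(:,2)$ is distinct from the two vertices forced onto $\mathcal{F}_1$; these are then all three vertices, giving $T \subseteq \conv([C_\star(:,2),\mathcal{F}_1])$ in the disjoint case and $T \subseteq \conv(\mathcal{F}_1 \cup \mathcal{F}_2)$ in the intersecting case (since $C_\star(:,2)\in\mathcal{F}_2$). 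The symmetric situation with the roles of $1$ and $2$ swapped lands in $\conv([C_\star(:,1),\mathcal{F}_2])$. In every branch I obtain $\conv(R) \subseteq T \subseteq W$, where $W$ is exactly one of the forbidden sets on the right-hand side of~\eqref{lem3eq1} or~\eqref{lem3eq2}. Evaluating this at the distinguished column $R(:,j)$ from the hypothesis gives $R(:,j) \in \conv(R) \subseteq W$, contradicting~\eqref{lem3eq1}/\eqref{lem3eq2}. This contradiction proves both columns are identified.

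I expect the main obstacle to be the bookkeeping that matches the vertex placements to the precise sets in the statement: verifying that ``two vertices on $\mathcal{F}_1$ together with the identified vertex $C_\star(:,2)$'' forces $T$ into $\conv([C_\star(:,2),\mathcal{F}_1])$ rather than a larger region, and that the support hypothesis is genuinely what rules out $C_\star(:,2)\in\mathcal{F}_1$ (and $C_\star(:,1)\in\mathcal{F}_2$), so that the pigeonhole count on the three vertices is not corrupted by a coincidental overlap. The disjoint/intersecting dichotomy in the statement is exactly the split needed to keep these containments tight, so once the correspondence between faces and convex hulls is pinned down, the remaining arguments are short.
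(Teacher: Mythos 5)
Your proposal is correct and takes essentially the same approach as the paper's proof: both rest on Lemma~\ref{lem2C} (at least two columns of $C$ must lie on the minimal face of any unidentified column), a pigeonhole count on the three columns of $C$, the support hypothesis to guarantee that $C_\star(:,2) \notin \mathcal{F}_{C_\star}\big(C_\star(:,1)\big)$ (and, implicitly, that the two minimal faces meet in at most a single vertex, so no two distinct vertices of $\conv(C)$ can serve double duty), and the fact that $R(:,j)$ is a convex combination of the columns of $C$ to contradict~\eqref{lem3eq1} or~\eqref{lem3eq2}. The only difference is organizational: you case-split on which of the two columns fails to be identified before splitting on whether the faces intersect, whereas the paper splits on the disjoint/intersecting dichotomy first; the mathematical content is identical.
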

\begin{proof} 
  Let $R = CS^\top$ be an exact NMF of $R$ of size $r=3$. 
 The proof mostly relies on Lemma~\ref{lem2C}: if $C_\star(:,k)$ is not identified, then there are least two columns of $C$ in $\mathcal{F}_{C_\star}\big(C_\star(:,k)\big)$. 
Note that, for $r=3$, $\mathcal{C}$ is a polygon, and hence there are three types of facets depending on their dimension: 
the 2-dimensional polygon itself, $\mathcal{C}$, 
1-dimensional segments, and 0-dimensional vertices. 
By~\eqref{lem3eq1} or~\eqref{lem3eq2}, 
$\mathcal{F}_{C_\star}\big(C_\star(:,j)\big)$ for $j=1,2$ cannot be the polygon itself and hence are either segments or vertices. 

  \textbf{Case 1}: $\mathcal{F}_{C_\star}\big(C_\star(:,2)\big) \cap 
\mathcal{F}_{C_\star}\big(C_\star(:,1)\big) = \emptyset$. 
Since $C$ has three columns, there cannot be four columns of $C$ in $\mathcal{F}_{C_\star}\big(C_\star(:,k)\big)$ for $k \in \{1,2\}$ and therefore   $C_\star(:,k)$ is identified for $k=1$ or $k=2$, say $C_\star(:,1)$ w.l.o.g. 
Then, because of~\eqref{lem3eq1}, $C_\star(:,2)$ must also be identified otherwise $R(:,j)$ cannot be reconstructed. In fact, if $C_\star(:,2)$ was not identified,  the two columns of $C$ not multiple of $C_\star(:,1)$ (which is identified) must be on $\mathcal{F}_{C_\star}\big(C_\star(:,2)\big)$, a contradiction between the fact that $R(:,j) = CS(j,:)^\top$ and Equation~\eqref{lem3eq1}.  

\textbf{Case 2}: $\mathcal{F}_{C_\star}\big(C_\star(:,2)\big) \cap 
\mathcal{F}_{C_\star}\big(C_\star(:,1)\big) \neq \emptyset$. 
The two facets $\mathcal{F}_{C_\star}\big(C_\star(:,1)\big)$ and $\mathcal{F}_{C_\star}\big(C_\star(:,2)\big)$ intersect in a vertex. In fact, for $r=3$, $\mathcal{F}_{C_\star}\big(C_\star(:,1)\big)$ and $\mathcal{F}_{C_\star}\big(C_\star(:,2)\big)$ are adjacent segments of $\mathcal{C}$ since the support of $C_\star(:,1)$ does not contain and is not contained in that of $C_\star(:,2)$. Moreover, by the same support condition, 
$C_\star(:,1) \notin \mathcal{F}_{C_\star}\big(C_\star(:,2)\big)$, and vice versa.  Therefore, if $C_\star(:,1)$ or $C_\star(:,2)$ is not identified, the three columns of $C$ belong to $\mathcal{F}_{C_\star}\big(C_\star(:,1)\big) \cup  
\mathcal{F}_{C_\star}\big(C_\star(:,2)\big)$, which is a contradiction since $R(:,j)$ does not belong to the convex hull of these sets, see~\eqref{lem3eq2}, 
and hence $C$ cannot be used to reconstruct $R(:,j)$. 
\end{proof}

\begin{example} Let us construct two examples to illustrate the two cases in Theorem~\ref{corthgeo}. 
To do so, we use the equivalence of Exact NMF with the NPP, and use the same outer polygon $\mathcal{P}_{out} = [0,1]^2$ as in Example~\ref{exNPP}. 

In the first case of Theorem~\ref{corthgeo}, the two minimal faces containing $C_\star(:,1)$ and $C_\star(:,2)$ do not intersect. 
For example, one can take the two points (0.5,0) and (0.5,1); see Figure~\ref{fig:ex3}.
    \begin{figure}[ht!]
\begin{center}
\includegraphics[width=0.7\textwidth]{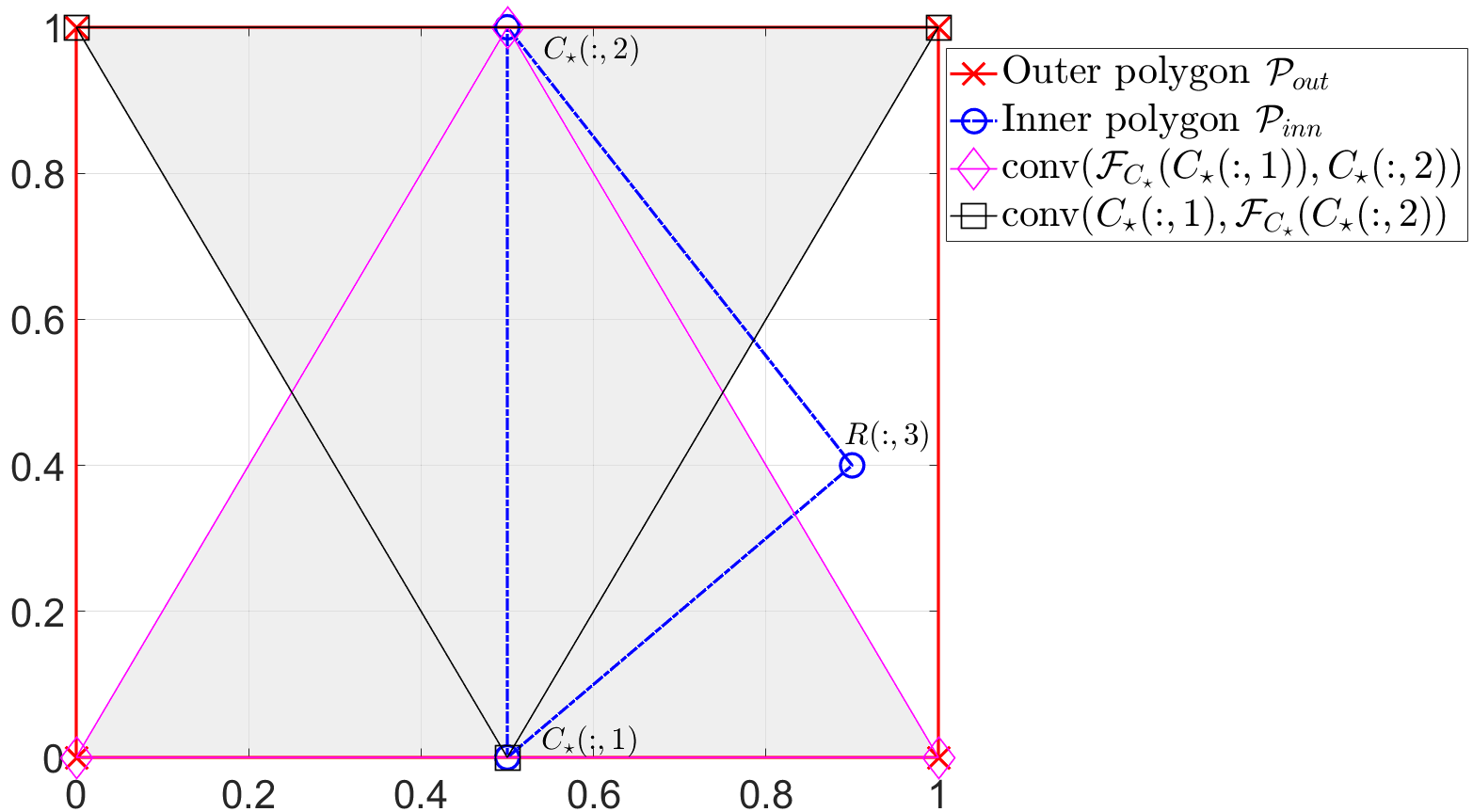} 
\caption{Geometric interpretation of the Exact NMF problem: identifiability of the first two columns of $C_\star$, case 1 of Theorem~\ref{corthgeo}. \label{fig:ex3}} 
\end{center}
\end{figure} 
These two points correspond to 
\[
C_\star(:,1) =  F (0.5,0) + g = (0,1,0.5,0.5)^\top, 
\text{ and } 
C_\star(:,2) =  F (0.5,1) + g = (1,0,0.5,0.5)^\top. 
\]
If a column of $R = C_\star S_\star^\top$ does not belong to 
\[ 
\conv\Big(C_\star(:,1), \mathcal{F}_{C_\star}\big(C_\star(:,2)\big)\Big) \cup  
\conv\Big(C_\star(:,2), \mathcal{F}_{C_\star}\big(C_\star(:,1)\big)\Big), 
\] 
then both columns are identifiable.  
This is the case on Figure~\ref{fig:ex3} with  
\[
R(:,3) =  F \revise{(0.9,0.4)} + g = (0.4,0.6,0.9,0.1)^\top. 
\]

In the second case of Theorem~\ref{corthgeo}, the two minimal faces containing $C_\star(:,1)$ and $C_\star(:,2)$ do intersect. 
For example, one can take the two points (0.5,0) and (0,0.5); see Figure~\ref{fig:ex2}.
These two points correspond to 
\[
C_\star(:,1) =  F (0.5,0) + g = (0,1,0.5,0.5)^\top, 
\text{ and } 
C_\star(:,2) =  F (0,0.5) + g = (0.5,0.5,0,1)^\top. 
\]
If a column of $R = C_\star S_\star^\top$ does not belong to 
\[
\conv\Big( 
\mathcal{F}_{C_\star}\big(C_\star(:,1)\big), 
\mathcal{F}_{C_\star}\big(C_\star(:,2)\big) 
\Big), 
\] 
then both columns are identifiable. 
    \begin{figure}[ht!]
\begin{center}
\includegraphics[width=0.45\textwidth]{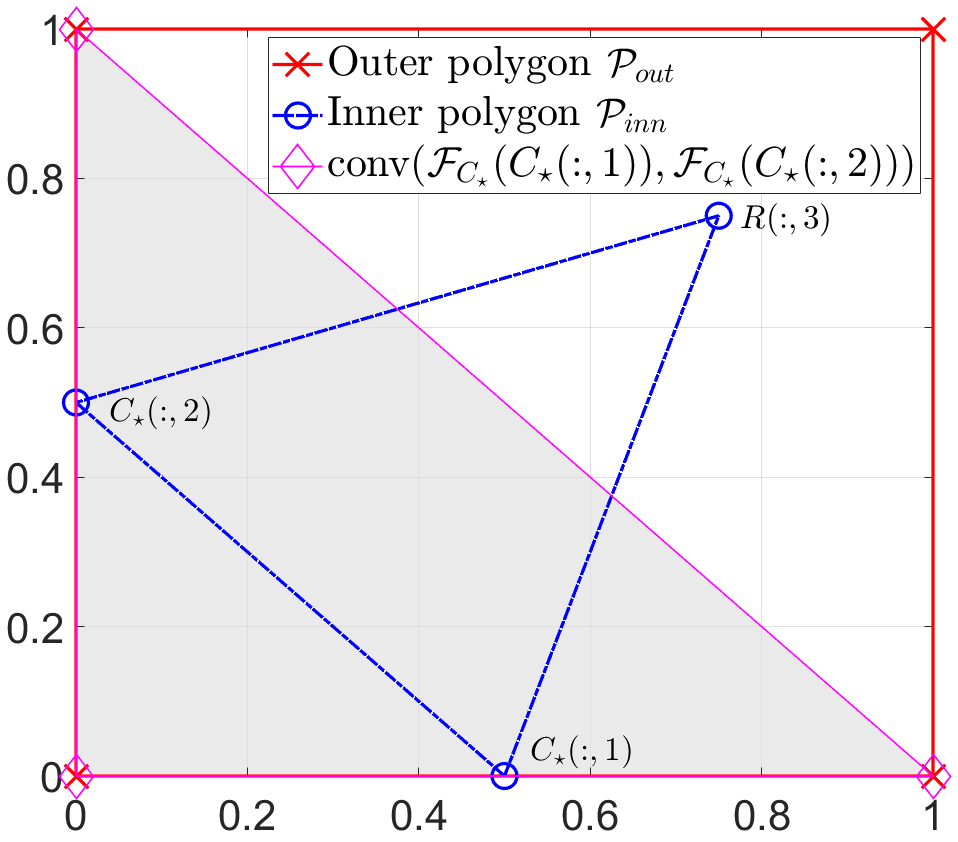} 
\caption{Geometric interpretation of the Exact NMF problem: identifiability of the first two columns of $C_\star$, case 2 of Theorem~\ref{corthgeo}. \label{fig:ex2}} 
\end{center}
\end{figure}
This is the case on Figure~\ref{fig:ex2}, with  
\[
R(:,3) =  F (0.75,0.75) + g = (0.75,0.25,0.75,0.25)^\top. 
\] 
\end{example}

\section{Applications of the new partial identifiability results} \label{sec:appli}

In this section, we first discuss whether the conditions of our identifiability results are reasonable in practice. 
Then we propose an algorithm, Algorithm~\ref{algo1}, that combines our partial identifiability results to certify the partial identifiability results for a given input matrix $R$. Finally, we illustrate its use on an example  from the chemometrics literature.

\subsection{Are the conditions of our identifiability results reasonable?}

All our proposed identifiability results rely on two facts: 
\begin{enumerate}
    \item Some columns of $C$ satisfy the selective window assumption, this requires some rows of $S$ to be unit vectors (up to scaling).  
    
    \item Some degree of sparsity of $C$. (Note that this is a necessary condition for identifiability of Exact NMF; see Theorem~\ref{th:necpartial}).  
\end{enumerate} 
This implies some degree of sparsity in $R=CS^\top$, since some columns of $R$ will be equal to the columns of $C$ that have zero entries.  

The selective window assumption is reasonable in many applications; see, e.g., the discussion in~\cite[Chapter 7]{gillis2020nonnegative} about separability and the references therein. 
However, sparsity is not necessarily natural in all applications where separability arises, e.g., in blind hyperspectral unmixing where spectral signatures are typically dense, and in facial feature extraction where facial images are dense. 
However, it is reasonable in other applications. For example,  
\begin{itemize}
    \item MCR: 
    The spectral content of some sources/components can be high (overlapping) while it is zero/small for others at some wavelength (selective window assumption). Moreover, all components are not present at all time window (sparsity); see an example in Section~\ref{sec:numexp}. 
    
    \item Topic modeling: the presence of anchor words, which are words associated to a single topic, is a reasonable assumption~\cite{arora2013practical} (selective window), while most documents only discuss a few topics (sparsity).  
    For example, using the widely data set tdt2\_top30 
    (9394 documents and 19528 words) we computed an approximate Exact NMF of the form $R \approx \tilde{R} = CS^\top$ for $r \in \{1,2,\dots,100\}$ 
     using a separable NMF algorithm, namely the successive projection algorithm (SPA)~\cite{Araujo01}, one of the most widely used ones.  
     All decompositions $\tilde{R} = CS^\top$ obtained are certified to be unique using the restricted DBU theorem (Theorem~\ref{mainth}). (Here we can only certify that the Exact NMF of the approximation is identifiable, since there does not exist an Exact NMF of $R$ for a small~$r$; in fact\footnote{We stopped the modified Gram-Schmidt with column pivoting at $r=800$, after about 5 hours on a standard laptop.}, $\rank(R) \geq 800$. This is often the case in practice because of the noise and model misfit.)  
    
\end{itemize} 

In summary, our results will likely 
apply when $R$ contains some columns with sufficiently many zero entries, while the selective window assumption makes sense.

\subsection{An algorithm to check partial identifiability} \label{sec:algo}

Relying on our new theoretical results, we provide in this section an algorithm that provides partial identifiability guarantees for the Exact NMF of a given nonnegative matrix $R$; see Algorithm~\ref{algo1}.  
As for all the results of this paper, 
Algorithm~\ref{algo1} assumes $\rank(R) = \rank_+(R)$ which is reasonable in most real-world applications.  Algorithm~\ref{algo1} is available from \url{https://bit.ly/partialNMFidentifv2}, along with all the examples presented in the paper (and two other ones).

\algsetup{indent=2em}
\begin{algorithm}[ht!]
\caption{Partial identifiability guarantees for $C$ in an Exact NMF $R=CS^\top$ of size $\rank(R)$} \label{algo1}
\begin{algorithmic}[1] 
\REQUIRE 
An Exact NMF of $R = CS^\top$, with $C \in \mathbb{R}^{m \times r}_+$ and $S \in \mathbb{R}^{n \times r}_+$ where $r = \rank(R)$.  



\ENSURE A subset $\mathcal{K}$ of the columns of $C$ that are guaranteed to be identifiable (Definition~\ref{def:partialUniqNMF}).   

    \medskip  













\STATE  Normalize $(R,C,S^\top)$ so that they are column stochastic; see~\eqref{eq:sumtoone}. 

\STATE Initialize $\mathcal{K} = \emptyset$. 

\STATE Let $\mathcal{L}$ be the set of columns of $C$ that satisfy the selective window assumption, that is, 

$\mathcal{L} = \{ i \ | \  \text{there exist } k \text{ and } \alpha > 0 
\text{ such that } S(k,:) = \alpha e_{(i)}^\top \}$. 


\STATE \emph{\% Use Theorems~\ref{mainth} and~\ref{mainthgeo0}}  


\FOR{every index in $k \in \mathcal{L} \backslash \mathcal{K}$}

\IF{$\rank\big(C(\mathcal{I},:)\big)  = r-1$ where $\mathcal{I} = \{ i \ | \ C(i,k) = 0\}$}

\STATE  $\mathcal{K} \leftarrow \mathcal{K} \cup \{k\}$. 

\ENDIF 

\IF{there exists $\mathcal{J}$ s.t.\  
$\mathcal{F}_C\big(C(:,k)\big) \cap \mathcal{F}_C\big(R(:,j)\big) \revise{ = \emptyset}$ for all $j \in \mathcal{J}$, $\rank\left( R(:,\mathcal{J}) \right) = r-1$ }

\STATE  $\mathcal{K} \leftarrow \mathcal{K} \cup \{k\}$. 

\ENDIF

\ENDFOR

\STATE \emph{\% Use Theorem~\ref{mainth2} combined Theorems~\ref{mainth} and~\ref{mainthgeo0}, recursively} 

\STATE $i = 1$ 

\WHILE{$i \leq |\mathcal{K}|$} 

    \STATE $\mathcal{P} = \{1,2,\dots,r\} \backslash \{\mathcal{K}(i)\}$ 
    \qquad \emph{\%  $\mathcal{K}(i)$ is the $i$th element in the set $\mathcal{K}$} 
    
    \STATE Let $\mathcal{J}$ be the subset of columns of $R$ not containing the support of $C\big(:,\mathcal{K}(i)\big)$. 
    
    \IF{$\rank\big(S(\mathcal{J},\mathcal{P})\big) = r-1$}
    
        \STATE $\mathcal{K}'$ $=$ Algorithm~\ref{algo1}$\big(C(:,\mathcal{P}), S(\mathcal{J},\mathcal{P})\big)$ 
        
        \STATE $\mathcal{K} \leftarrow \mathcal{K} \cup \mathcal{P}(\mathcal{K}')$, 
        
    \ENDIF
    
    \STATE $i \leftarrow i+1$ 
    
\ENDWHILE

\STATE \textbf{if} $r=3$ \textbf{then} use Theorem~\ref{corthgeo} for pairs of indices in $\mathcal{K}$.     





%

\end{algorithmic}  
\end{algorithm}


\begin{remark}[Use of Algorithm~\ref{algo1} for real-world data] 
NMF algorithms may return $C$ and $S$ with many entries close to zero but not exactly zero (e.g., if the algorithm has not converged). 
Therefore, to check whether your computed solution is close to being (partially) identifiable, you can set these entries to zero using some threshold strategy, and then call Algorithm~\ref{algo1}. 

\revise{Another strategy is to weaken the sharp zero condition in the sense of generalized Borgen plots~\cite{jurss2015generalized}. 
This strategy is useful for experimental noisy data which may include also a background subtraction resulting in small negative entries.} 
\end{remark}

\subsection{Numerical example from the chemometrics literature} \label{sec:numexp}

Let us consider the 5-component data set from~\cite[Section 4.3]{lakeh2022predicting}; see Figure~\ref{fig:5comp}. 
\begin{figure}[ht!]
\begin{center}
\begin{tabular}{cc}
  \includegraphics[width=0.48\textwidth]{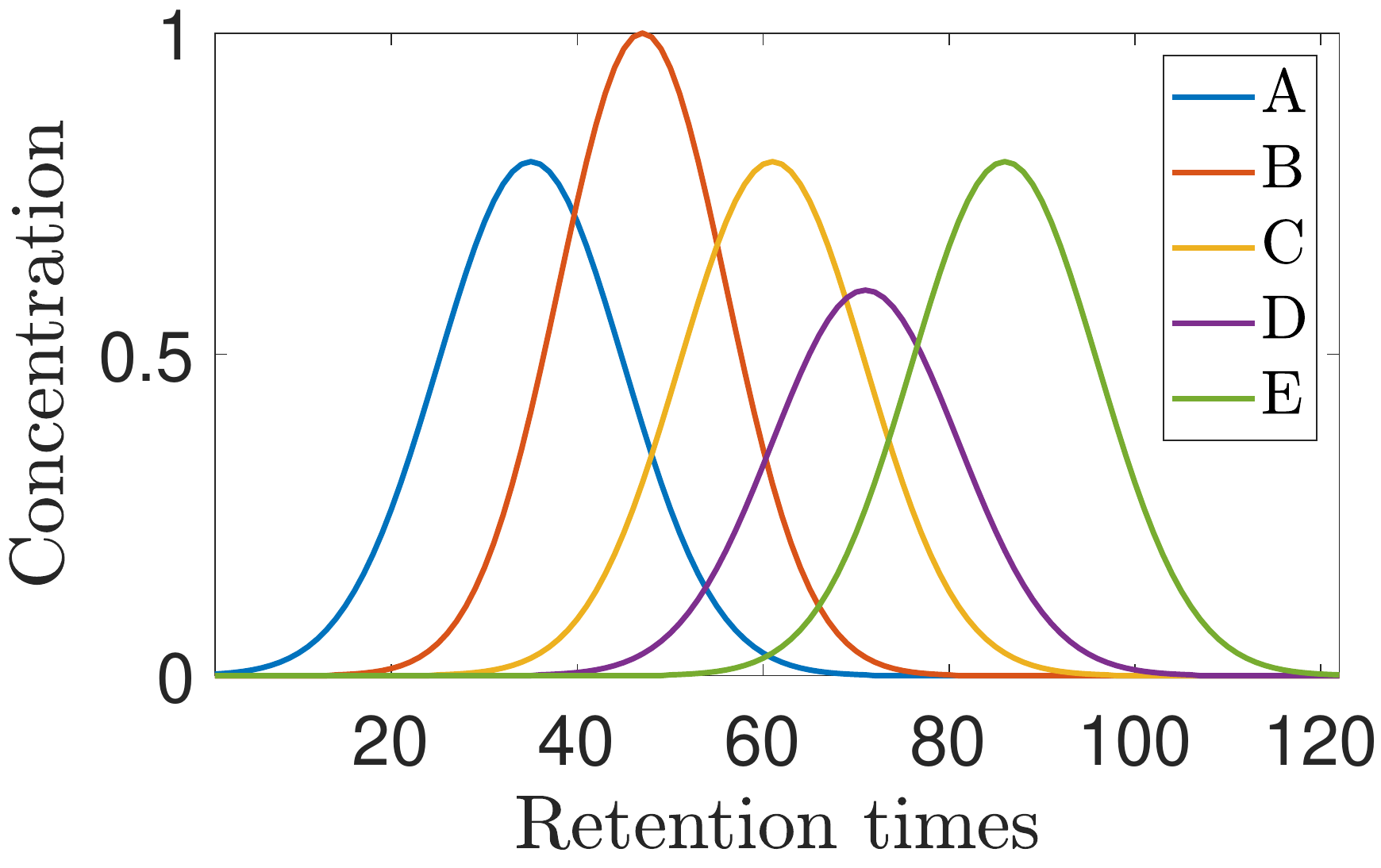} 
   &  \includegraphics[width=0.48\textwidth]{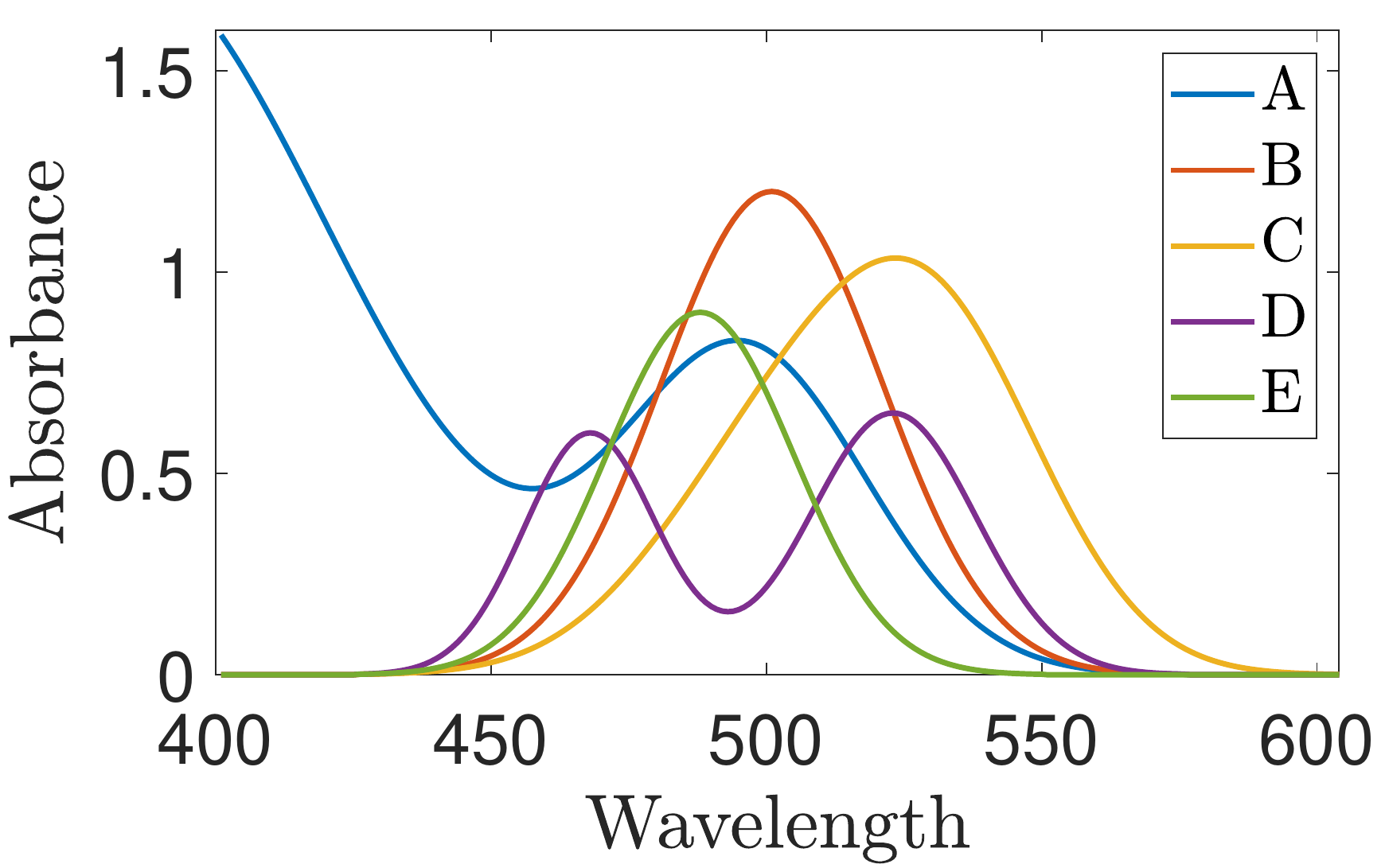} 
\end{tabular}
\caption{A five-component data set. 
On the left, the elution profiles of the chemical components which are the columns of $C^\star$. 
On the right, the spectra of the chemical components which are the columns of $S^\star$. 
\label{fig:5comp}} 
\end{center}
\end{figure} 
Algorithm~\ref{algo1} certifies that the first and third columns of $C^\star$ are identifiable, 
and the fifth column of $S^\star$: 

 \texttt{[K,L] = check\_partial\_identif(C,S)} 

 \texttt{K = [1 3], L = 5.} 

For example, for $C^\star$, using the restricted DBU theorem, the elution profile (that is, the columns of $C^\star$) that can be guaranteed to be identifiable are the first (A) and third ones (C): they satisfy the selective window condition (first wavelengths for A, last ones for C) while the FRZRW condition can be checked (the other elution profiles have rank $r-1$ when restricted to the entries where the corresponding column of $C^\star$ is zero). 
Note that the first columns of $S^\star$ satisfies the selective window condition, but not the FRZRW condition because when its spectrum is equal to zero, the spectrum of other components also are (namely, all of them but C). 
These are the same conclusion as in~\cite{lakeh2022predicting}.  \\




\section{Conclusion}

In this paper, we have provided the following partial identifiability  results for Exact NMF: 
\begin{itemize}
    \item a rigorous description and proof of the restricted DBU theorem (Theorem~\ref{mainth}). 
    
    \item a new partial identifiability result  based on the geometric interpretation of the restricted DBU theorem (Theorem~\ref{mainthgeo0}). 
    
    \item a sequential approach to guarantee the identifiability of more factors (Theorem~\ref{mainth2}). 
    
\end{itemize}

Since this paper is, to the best of our knowledge, the first to rigorously investigate partial identifiability of Exact NMF, there is still a lot to be done. 
In particular, can stronger partial identifiability theorems be obtained? For example, is it possible to provide partial identifiability results for several components simultaneously under weaker conditions? We have done this for the case $r=3$ considering two components at a time (see Theorem~\ref{corthgeo}), and this idea can probably be generalized to larger~$r$. 
In particular, considering all factors allows one to relax the selective window assumption (a.k.a.\ separability, which is rather strong) to the sufficiently scattered condition; see Theorem~\ref{th:ssc}.

\revise{
\section*{Acknowledgement} 
We thank the reviewers for their insightful comments that helped us improve the paper. 
}

\small 

\bibliographystyle{spmpsci}
\bibliography{partial_unique_NMF}

\end{document}